\theoremstyle{plain}
\newtheorem{theorem}{Theorem}[section]
\newtheorem{lemma}[theorem]{Lemma}
\newtheorem{corollary}[theorem]{Corollary}
\newtheorem{assumption}{Assumption}
\theoremstyle{definition}
\newtheorem{remark}[theorem]{Remark}
\numberwithin{equation}{section}
\renewcommand{\d}{\textnormal{d}}
\newcommand{\N}{\mathbb{N}}
\newcommand{\R}{\mathbb{R}}
\DeclareMathOperator{\supp}{supp}
\newcommand{\Dsp}{\mathcal{D}^{\vec{s},\vec{p}}(\mathbb{R}^n)}
\DeclareMathOperator{\dist}{dist}
\newcommand{\BIGOP}[1]
{
\mathop{\mathchoice%
{\raise-0.22em\hbox{\huge $#1$}}%
{\raise-0.05em\hbox{\Large $#1$}}{\hbox{\large $#1$}}{#1}}}
\newcommand{\bigtimes}{\BIGOP{\times}}
\newcommand{\eps}{\varepsilon}
\title[]{The concentration-compactness principle for the nonlocal anisotropic $p$-Laplacian of mixed order}
\author{Jamil Chaker}
\address{Fakult\"at f\"ur Mathematik, Universit\"at Bielefeld, 33615 Bielefeld, Germany}
\email{jchaker@math.uni-bielefeld.de}
\author{Minhyun Kim}
\address{Fakult\"at f\"ur Mathematik, Universit\"at Bielefeld, 33615 Bielefeld, Germany}
\email{minhyun.kim@uni-bielefeld.de}
\author{Marvin Weidner}
\address{Fakult\"at f\"ur Mathematik, Universit\"at Bielefeld, 33615 Bielefeld, Germany}
\email{mweidner@math.uni-bielefeld.de}
\subjclass[2020]{35R11, 35A01, 49J35, 46E35, 46B50.}
\keywords{concentration-compactness principle, anisotropy, orthotropic structure, nonlocal operator, Sobolev inequality, quasilinear equation, $p$-Laplacian.}
\thanks{Jamil Chaker gratefully acknowledges support by the DFG through CRC 1283. Minhyun Kim and Marvin Weidner gratefully acknowledge financial support by the DFG through IRTG 2235.}
\begin{document}

\begin{abstract}
In this paper, we study the existence of minimizers of the Sobolev quotient for a class of nonlocal operators with an orthotropic structure having different exponents of integrability and different orders of differentiability. Our method is based on the concentration-compactness principle which we extend to this class of operators. One consequence of our main result is the existence of a nontrivial nonnegative solution to the corresponding critical problem.
\end{abstract}

\maketitle


\section{Introduction} \label{sec:introduction}
In this paper, we study variational problems for the nonlocal anisotropic $p$-Laplacian of mixed order. For $x\in\R^n$, let
\begin{equation}\label{eq:critintro}
\sum_{i=1}^n (-\partial_{ii})^{s_i}_{p_i} u(x) := \sum_{i=1}^n s_i(1-s_i) \int_{\mathbb{R}} \frac{|u(x)-u(x+he_i)|^{p_i-2}(u(x)-u(x+he_i))}{|h|^{1+s_ip_i}} \,\d h,
\end{equation}
where $s_1,\dots,s_n\in(0,1)$ and $p_1,\dots,p_n>1$.  
This operator can be seen as a nonlocal analog of the anisotropic $p$-Laplacian
\begin{equation}\label{eq:locaniso}
\sum_{i=1}^n (-\partial_{ii})_{p_i} u(x) = -\sum_{i=1}^n \frac{\partial}{\partial x_i} \left( \left|\frac{\partial u(x)}{\partial x_i}\right|^{p_i-2}\frac{\partial u(x)}{\partial x_i} \right).
\end{equation}
Local operators with such an orthotropic structure are well known in the literature and there are several results related to this type of operators, see \cite{Lions69,Mar89,BMS90,DC09,DCM09,FS19,BB20,CFS21,dST21,FVV21, BBLV21} and the references therein. We can read off \eqref{eq:critintro}, that the operator under consideration has on the one hand different exponents of integrability and on the other hand different orders of differentiability. Considering such operators, it is natural to work on anisotropic Sobolev spaces.
The aim of this paper is to study the existence of minimizers of the Sobolev quotient for the operator in \eqref{eq:critintro}. As a consequence of the existence of a minimizer, we get the existence of a solution to the corresponding critical problem. One of the main auxiliary results in this paper is a robust Sobolev-type inequality for the operator $\sum_{i=1}^n (-\partial_{ii})^{s_i}_{p_i}$.

The approach we use in this paper is based on the concentration-compactness principle (CCP) that provides an important tool to prove relative compactness of minimizing sequences. It has been introduced by P.-L. Lions in a series of papers, see \cite{Lions1,Lions2,Lions3,Lions4}.  
Before we address known results from the literature and describe the strategy of our paper, we formulate the main results and assumptions of the present work.

Given $s_1,\dots,s_n\in(0,1)$ and $p_1,\dots,p_n>1$, we define
\begin{equation*}
\bar{s} = \left( \frac{1}{n} \sum_{i=1}^n \frac{1}{s_i} \right)^{-1}, \quad \overline{sp}=\left( \frac{1}{n} \sum_{i=1}^n \frac{1}{s_i p_i} \right)^{-1}, \quad\text{and}\quad p^{\ast} = \frac{n\overline{sp}/\bar{s}}{n-\overline{sp}}.
\end{equation*}
\begin{assumption}\label{assumption:sp}
Given $s_1,\dots,s_n\in(0,1)$ and $p_1,\dots,p_n>1$, we assume
\[ \overline{sp} < n\quad \text{ and } \quad  p_{\max} := \max\lbrace p_1, \dots, p_n \rbrace < p^{\ast}. \]
\end{assumption}
Let $\vec{s}=(s_1,\dots,s_n)$ and $\vec{p}=(p_1,\dots,p_n)$. 
We introduce the homogeneous anisotropic Sobolev space $\mathcal{D}^{\vec{s}, \vec{p}}(\mathbb{R}^n)$ as
\begin{equation*}
\mathcal{D}^{\vec{s}, \vec{p}}(\mathbb{R}^n) = \left\lbrace u \in L^{p^\ast}(\mathbb{R}^n): \sum_{i=1}^n \int_{\mathbb{R}^n} \int_{\mathbb{R}} s_i(1-s_i) \frac{|u(x)-u(x+he_i)|^{p_i}}{|h|^{1+s_i p_i}} \,\d h \,\d x < \infty \right\rbrace
\end{equation*}
equipped with the norm $\Vert u \Vert_{\Dsp}$, where
\begin{equation*}
\Vert u \Vert_{\Dsp} := \sum_{i=1}^n \Vert D^{s_i}_{p_i} u \Vert_{L^{p_i}(\R^n)} := \sum_{i=1}^n \left( \int_{\mathbb{R}^n} \int_{\mathbb{R}} s_i(1-s_i) \frac{|u(x)-u(x+he_i)|^{p_i}}{|h|^{1+s_i p_i}} \,\d h\, \d x \right)^{1/p_i}
\end{equation*}
with
\begin{equation*}
|D^{s_i}_{p_i} u(x)|^{p_i} := s_i(1-s_i) \int_{\mathbb{R}} \frac{|u(x)-u(x+he_i)|^{p_i}}{|h|^{1+s_i p_i}} \,\d h
\end{equation*}
for $i=1,\dots,n$. Note that $\Dsp$ becomes a normed space in the light of \Cref{thm:sobolev}. It is the natural space for minimizing the functional connected to the Sobolev quotient. The first main result of this paper is a robust Sobolev-type inequality.

\begin{theorem} \label{thm:sobolev}
Let $s_1,\dots,s_n\in[s_0,1)$ for some $s_0\in(0,1)$ and $p_1,\dots,p_n>1$ be such that \Cref{assumption:sp} holds.
Then there is a constant $C = C(n, p^{\ast}, p^{\ast}-p_{\max}, s_0) > 0$ such that
for every $u \in \Dsp$
\begin{equation*}
\|u\|_{L^{p^{\ast}}(\mathbb{R}^n)} \leq C\Vert u \Vert_{\Dsp}.
\end{equation*}
\end{theorem}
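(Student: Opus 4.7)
The plan is to establish a multiplicative Troisi-type inequality and then pass to the desired sum form via Young's inequality. By density it suffices to treat $u\in C_c^\infty(\R^n)$. A direct scaling check ($u\mapsto u(\lambda\,\cdot)$) verifies that $p^*$ has the correct homogeneity, via the relation $n/p^*=\bar s(n-\overline{sp})/\overline{sp}$.

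The first key ingredient is a robust one-dimensional fractional Sobolev inequality: for $v\in C_c^\infty(\R)$, $s\in[s_0,1)$, $p>1$ with $sp<1$,
\[
\|v\|_{L^{q}(\R)}\leq C(p,s_0)\,\|D^{s}_{p}v\|_{L^p(\R)},\qquad q=p/(1-sp),
\]
with a constant uniform in $s\in[s_0,1)$. Uniformity as $s\to 1^-$ comes from the $s(1-s)$ prefactor built into $D^s_p$ (in the spirit of Bourgain--Brezis--Mironescu), while $s\geq s_0$ handles the other endpoint.

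With this in hand, freezing the complementary variables $x_{\hat i}=(x_1,\dots,x_{i-1},x_{i+1},\dots,x_n)$ and applying the 1D inequality in the $x_i$-slice gives, for each $i$,
\[
\Big(\int_{\R}|u(x)|^{q_i}\,\d x_i\Big)^{1/q_i}\leq C\,\Big(\int_{\R}|D^{s_i}_{p_i}u(x)|^{p_i}\,\d x_i\Big)^{1/p_i}.
\]
I would then combine these $n$ slice-wise bounds by the iterated H\"older/Loomis--Whitney scheme underlying Troisi's classical anisotropic Sobolev inequality, with weights $\theta_i:=\bar s/(ns_i)$, which sum to $1$ and (by the homogeneity computation) produce exactly the exponent $p^*$ on the left-hand side. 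This yields
\[
\|u\|_{L^{p^*}(\R^n)}\leq C\prod_{i=1}^{n}\|D^{s_i}_{p_i}u\|_{L^{p_i}(\R^n)}^{\theta_i}.
\]
Young's inequality with the same weights then gives $\prod_i a_i^{\theta_i}\leq\sum_i\theta_i a_i\leq\sum_i a_i$ (since $0<\theta_i\leq 1$), delivering the desired bound by $\|u\|_{\Dsp}$.

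The main obstacle is that the 1D embedding used above requires $s_i p_i<1$, whereas the hypothesis only imposes $\overline{sp}<n$ and so permits some $s_i p_i\geq 1$. I would handle this by running the whole scheme not on $u$ but on $|u|^\gamma$ for a suitable $\gamma>1$, relying on the pointwise estimate
\[
\big||a|^\gamma-|b|^\gamma\big|^{p_i}\leq \gamma^{p_i}(|a|+|b|)^{(\gamma-1)p_i}|a-b|^{p_i}
\]
to control $D^{s_i}_{p_i}|u|^\gamma$ in terms of $|u|^{\gamma-1}$ and $D^{s_i}_{p_i}u$ after a H\"older split. Picking $\gamma$ so that every direction falls in the admissible 1D regime and then absorbing the resulting $\|u\|_{L^{\gamma p^*}}$-factor on the right-hand side is possible precisely under the strict gap $p_{\max}<p^*$; this is what produces the $C=C(n,p^*,p^*-p_{\max},s_0)$ dependence in the statement.
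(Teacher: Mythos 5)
Your plan is genuinely different from the paper's (which goes through the Fefferman--Stein sharp maximal function on the doubling metric space $(\R^n,d_M)$ induced by the anisotropic rectangles $M_\rho$), but as written it has two concrete gaps, and I do not see how either is routine.

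\textbf{The Loomis--Whitney step does not apply to $L^{q_i}$ slicewise bounds.} The iterated H\"older/Loomis--Whitney scheme behind Troisi's inequality hinges on a slicewise bound that is \emph{pointwise} in the $x_i$ direction: in the local $p_i=1$ case one has $|u(x)|\le \int_\R |\partial_i u|\,\d x_i =: g_i(x_{\hat i})$, so that $|u(x)|^{n/(n-1)}\le\prod_i g_i(x_{\hat i})^{1/(n-1)}$ and the Gagliardo/Loomis--Whitney lemma applies to the product of the $g_i$'s. The one-dimensional fractional Sobolev inequality with $s_ip_i<1$ gives only $\|u(\cdot,x_{\hat i})\|_{L^{q_i}_{x_i}}\le C A_i(x_{\hat i})$ with $q_i=p_i/(1-s_ip_i)<\infty$, which is \emph{not} a pointwise bound and cannot be fed into the same product. (Note that $sp<1$ can never give an $L^\infty$ slicewise bound; $sp=1$ only lands in $\BMO$.) A multiplicative inequality $\|u\|_{L^{p^*}(\R^n)}\le C\prod_i\|D^{s_i}_{p_i}u\|_{L^{p_i}}^{\theta_i}$ with the weights $\theta_i=\bar s/(ns_i)$ has the right homogeneity and holds for tensor-product $u=\prod f_i$ by log-convexity, but deducing it from the slicewise $L^{q_i}$ bounds would require a genuinely new iteration with nontrivial bookkeeping of mixed norms at each peel-off step, not an application of the classical scheme. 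You need to either supply that iteration or prove the multiplicative inequality by another route.

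\textbf{The $|u|^\gamma$ reduction does not fix the $s_ip_i\ge 1$ obstruction.} Running the scheme on $|u|^\gamma$ while keeping the same $(s_i,p_i)$ does nothing to the constraint $s_ip_i<1$, which is independent of $\gamma$. If instead you apply the 1D embedding with new integrability $r_i<p_i$ chosen so that $s_ir_i<1$, then estimating $\|D^{s_i}_{r_i}|u|^\gamma\|_{L^{r_i}}$ via your pointwise estimate requires a H\"older split of the kernel $|h|^{-1-s_ir_i}$ into a piece $|h|^{-\alpha}$ attached to $|u(x)-u(x+he_i)|^{r_i}$ and a piece $|h|^{-\beta}$ attached to $(|u(x)|+|u(x+he_i)|)^{(\gamma-1)r_i}$, with $\alpha+\beta=1+s_ir_i$. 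Matching the first factor to $\|D^{s_i}_{p_i}u\|_{L^{p_i}}^{r_i}$ forces $\alpha=(1+s_ip_i)r_i/p_i$, hence $\beta=1-r_i/p_i$, and the conjugate exponent $(p_i/r_i)'=p_i/(p_i-r_i)$ gives $\beta(p_i/r_i)'=1$ exactly, so the complementary $h$-integral $\int|h|^{-1}(\cdot)\,\d h$ is logarithmically divergent. Shifting $\alpha$ up to make the complementary integral converge overshoots the admissible singularity on the first factor. So the absorption step, which you invoke to produce the $C=C(n,p^*,p^*-p_{\max},s_0)$ dependence, is not available as described.

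For comparison, the paper avoids both issues by replacing the slicewise reduction with a pointwise oscillation estimate on anisotropic rectangles, $\fint_{M_\rho(x)}|u-(u)_{M_\rho(x)}|\le\sum_i\rho^{s_im_i}\fint_{M_\rho(x)}F_i$, and then using the $L^{p^*}$-boundedness of ${\bf M}^\sharp$ and the $L^{p_i}$-boundedness of ${\bf M}$ on the doubling space $(\R^n,d_M,\text{Leb})$; the gap $p_{\max}<p^*$ enters only through the doubling constant of $d_M$. If you want to pursue your route, the two points above are exactly what need to be repaired.
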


\Cref{thm:sobolev} is robust in the sense that the appearing constant depends only on a lower bound $s_0 \le s_1,\dots,s_n$. This allows us to recover the local anisotropic Sobolev inequality (see \cite{Troi69,Rak79,Rak80,KK83})
\begin{equation} \label{eq:Troisi}
\|u\|_{L^q(\mathbb{R}^n)} \leq C \sum_{i=1}^n \|D_i u\|_{L^{p_i}(\mathbb{R}^n)},
\end{equation}
where
\begin{equation} \label{eq:p}
p_i > 1, \quad \bar{p}=\left( \frac{1}{n} \sum_{i=1}^n \frac{1}{p_i} \right)^{-1}, \quad \bar{p}<n, \quad q = \frac{n\bar{p}}{n-\bar{p}}, \quad \text{and}\quad p_{\max} < q.
\end{equation}
Moreover, our result provides an alternative proof of \eqref{eq:Troisi}.

Our second main result establishes the existence of nonnegative minimizers of the Sobolev quotient.

\begin{theorem} \label{thm:existence}
Assume $\vec{s}=(s_1,\dots,s_n)$ and $\vec{p}=(p_1,\dots,p_n)$ satisfy \Cref{assumption:sp}.
Then there exists a nonnegative minimizer $u \in \mathcal{D}^{\vec{s},\vec{p}}(\mathbb{R}^n)$ of
\begin{equation} \label{eq:min_prob}
S:= \inf_{u \in \mathcal{D}^{\vec{s},\vec{p}}(\mathbb{R}^n), \|u\|_{L^{p^\ast}(\mathbb{R}^n)}=1} \sum_{i=1}^n \frac{1}{p_i} \Vert D^{s_i}_{p_i}u\Vert_{L^{p_i}(\R^n)}^{p_i}.
\end{equation}
\end{theorem}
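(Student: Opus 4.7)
The plan is to apply the concentration-compactness method to a minimizing sequence. Fix $(u_k) \subset \Dsp$ with $\|u_k\|_{L^{p^\ast}(\R^n)} = 1$ and $E(u_k) := \sum_{i=1}^n \tfrac{1}{p_i}\|D^{s_i}_{p_i} u_k\|_{L^{p_i}(\R^n)}^{p_i} \to S$. Since $||a|-|b|| \le |a-b|$, replacing $u_k$ by $|u_k|$ does not increase $E$, so we may assume $u_k \ge 0$. By \Cref{thm:sobolev} the sequence is bounded in $\Dsp$; hence along a subsequence $u_k \rightharpoonup u$ weakly in $L^{p^\ast}(\R^n)$, each semi-norm $\|D^{s_i}_{p_i} u_k\|_{L^{p_i}}$ stays bounded, and an anisotropic Rellich-type embedding on balls yields $u_k \to u$ almost everywhere.

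Introduce the Radon measures $\nu_k := |u_k|^{p^\ast}\, \d x$ and $\mu_k := \sum_i \tfrac{1}{p_i}|D^{s_i}_{p_i} u_k|^{p_i}\, \d x$, and pass to a further subsequence so that $\nu_k \rightharpoonup^{\ast} \nu$ and $\mu_k \rightharpoonup^{\ast} \mu$. Applying the anisotropic nonlocal concentration-compactness principle (the main technical tool developed earlier in the paper) gives an at most countable set $J$, points $(x_j)_{j\in J}\subset\R^n$, nonnegative weights $(\mu_j,\nu_j)_{j\in J}$, and nonnegative numbers $\mu_\infty,\nu_\infty$ capturing the mass escaping to infinity, such that
\begin{equation*}
\nu = |u|^{p^\ast}\,\d x + \sum_{j\in J} \nu_j\, \delta_{x_j}, \qquad \mu \ge \sum_{i=1}^n \tfrac{1}{p_i}|D^{s_i}_{p_i} u|^{p_i}\,\d x + \sum_{j\in J} \mu_j\, \delta_{x_j},
\end{equation*}
together with reverse-Sobolev atomic bounds relating $\mu_j$ to $\nu_j$ (and $\mu_\infty$ to $\nu_\infty$), each coming from \Cref{thm:sobolev} applied to suitable cut-offs concentrated at $x_j$ or near infinity.

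Passing to the limit in the constraint and in the energy gives
\begin{equation*}
1 = \|u\|_{L^{p^\ast}}^{p^\ast} + \sum_{j\in J}\nu_j + \nu_\infty, \qquad S \ge E(u) + \sum_{j\in J}\mu_j + \mu_\infty.
\end{equation*}
Combined with the atomic bounds and with an inequality of the form $E(u) \ge S\, G(\|u\|_{L^{p^\ast}}^{p^\ast})$ for a suitable increasing function $G$ with $G(0)=0$ (coming from the definition of $S$ together with the critical scaling), this is incompatible with any nontrivial splitting of the total mass. Hence exactly one of $\|u\|_{L^{p^\ast}}^{p^\ast}$, the numbers $\{\nu_j\}_{j\in J}$, and $\nu_\infty$ equals $1$. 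The remaining cases of concentration ($\nu_j = 1$) and vanishing ($\nu_\infty = 1$) are ruled out by means of the critical anisotropic scaling $u_\lambda(x) = \lambda^\alpha u(\lambda^{\beta_1} x_1,\dots,\lambda^{\beta_n} x_n)$, with $\alpha,\beta_i$ determined by $\vec s, \vec p$ so as to preserve simultaneously $\|u\|_{L^{p^\ast}}$ and each $\|D^{s_i}_{p_i} u\|_{L^{p_i}}^{p_i}$: translating and rescaling $(u_k)$ accordingly produces a new minimizing sequence with nontrivial weak limit. Thus $\|u\|_{L^{p^\ast}} = 1$, and by weak lower semi-continuity of each $u \mapsto \|D^{s_i}_{p_i} u\|_{L^{p_i}}$ we obtain $E(u) \le S$, so $u$ is a nonnegative minimizer.

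The principal obstacle is the absence of a single homogeneity for $E$, which is a sum of $p_i$-energies of different degrees and different orders of differentiability: the classical strict-subadditivity argument based on $E(tu) = t^p E(u)$ does not apply directly. Its substitute is the critical anisotropic rescaling above, which leaves every term of $E$ invariant simultaneously and must be paired with the atomic Sobolev inequalities supplied by the CCP. Setting up the CCP in the anisotropic nonlocal framework and combining it with the tailored scaling to exclude both concentration and vanishing is the analytic heart of the argument; once in place, the remaining verifications parallel the classical scheme of Lions.
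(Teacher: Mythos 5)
Your proposal is correct and follows essentially the same path as the paper: anisotropic Sobolev inequality and local compact embedding, passage to weak-$\ast$ limits of the energy and mass measures, a Lions-type decomposition with atomic parts plus mass at infinity, reverse H\"older/atomic inequalities relating $\mu_j$ to $\nu_j$ (and $\mu_\infty$ to $\nu_\infty$), the strict-concavity (power $p_{\max}/p^\ast < 1$) step forcing the three mass components into $\{0,1\}$, and anisotropic rescaling to preclude the degenerate alternatives. The only thing I would flag as underspecified is the final elimination step: you say the anisotropic rescaling ``produces a new minimizing sequence with nontrivial weak limit,'' but the concrete device that makes this work is the L\'evy concentration function normalization --- one chooses $\rho_k$ and $y_k$ so that $\sup_y \int_{M_{\rho_k}(y)} |u_k|^{p^\ast} = \int_{M_{\rho_k}(y_k)} |u_k|^{p^\ast} = 1/2$, and then rescales. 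The paper performs this renormalization \emph{before} extracting the weak limit, which is cleaner than doing it post hoc: once $\int_{M_1(0)}|v_k|^{p^\ast} = 1/2 = \sup_y\int_{M_1(y)}|v_k|^{p^\ast}$, vanishing ($\nu_\infty = 1$) is impossible since the tail mass is at most $1/2$, and concentration at a single atom $\xi_0$ (forcing $\nu_{\xi_0} = 1$) contradicts $\sup_y\int_{M_1(y)}|v_k|^{p^\ast} = 1/2$. Making that normalization explicit would close the only soft spot in your sketch; otherwise the argument matches the paper's, modulo your choice to work with the total measure $\nu = \lim |u_k|^{p^\ast}\,\d x$ rather than the difference measure $\lim |v_k - v|^{p^\ast}\,\d x$ (equivalent by Brezis--Lieb).
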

A consequence of the foregoing result is the existence of a nontrivial, nonnegative solution to the corresponding critical problem.
\begin{corollary}
\label{cor:existence}
Assume $\vec{s}=(s_1,\dots,s_n)$ and $\vec{p}=(p_1,\dots,p_n)$ satisfy \Cref{assumption:sp}.
Then there exists a nontrivial nonnegative solution $u \in \mathcal{D}^{\vec{s},\vec{p}}(\mathbb{R}^n)$ of
\begin{equation} \label{eq:main_eq}
\sum_{i=1}^n (-\partial_{ii})^{s_i}_{p_i} u = |u|^{p^{\ast}-2}u \quad \text{in}~ \mathbb{R}^n.
\end{equation}
\end{corollary}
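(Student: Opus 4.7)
The plan is to combine \Cref{thm:existence} with a Lagrange multiplier argument and an anisotropic dilation to produce the desired solution of the critical equation.

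First I would invoke \Cref{thm:existence} to obtain a nontrivial nonnegative minimizer $u\in\Dsp$ with $\|u\|_{L^{p^{\ast}}(\R^{n})}=1$. Let $G(u)=\frac{1}{p^{\ast}}\|u\|_{L^{p^{\ast}}(\R^{n})}^{p^{\ast}}$ and denote by $J$ the functional appearing in \eqref{eq:min_prob}. A standard computation --- the Gateaux derivatives of $J$ and $G$ on $\Dsp$, justified by Hölder's inequality on the product measure $s_{i}(1-s_{i})|h|^{-1-s_{i}p_{i}}\,\d h\,\d x$ together with $u,\varphi\in\Dsp$ --- yields the Euler--Lagrange identity: there exists $\mu\in\R$ such that
\[
\sum_{i=1}^{n}(-\partial_{ii})^{s_{i}}_{p_{i}}u = \mu\,|u|^{p^{\ast}-2}u \qquad\text{weakly in }\R^{n}.
\]
Testing this identity with $\varphi=u$ and using that the $i$-th term of $J$ is positively $p_{i}$-homogeneous while $G$ is $p^{\ast}$-homogeneous (Euler's identity), I would obtain $\mu$ as a positive constant multiple of $\sum_{i}\|D^{s_{i}}_{p_{i}}u\|_{L^{p_{i}}(\R^{n})}^{p_{i}}$; in particular, $\mu>0$.

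It remains to absorb $\mu$ into $u$. Because the operators $(-\partial_{ii})^{s_{i}}_{p_{i}}$ carry distinct homogeneities in both amplitude ($p_{i}-1$) and spatial scale ($s_{i}p_{i}$), a single multiplicative rescaling $u\mapsto tu$ cannot simultaneously normalize all $n$ left-hand terms against the right-hand side. I would instead use the anisotropic dilation
\[
v(x):=u(\lambda_{1}x_{1},\ldots,\lambda_{n}x_{n}),\qquad \lambda_{i}:=\mu^{-1/(s_{i}p_{i})}>0.
\]
A one-dimensional change of variables in the defining kernel of each $(-\partial_{ii})^{s_{i}}_{p_{i}}$ gives the transformation rule
\[
(-\partial_{ii})^{s_{i}}_{p_{i}}v(x)=\lambda_{i}^{s_{i}p_{i}}\,(-\partial_{ii})^{s_{i}}_{p_{i}}u(\lambda_{1}x_{1},\ldots,\lambda_{n}x_{n}),
\]
so the choice $\lambda_{i}^{s_{i}p_{i}}=1/\mu$ cancels $\mu$ termwise on the left and produces the desired equation $\sum_{i}(-\partial_{ii})^{s_{i}}_{p_{i}}v=|v|^{p^{\ast}-2}v$ in $\R^{n}$. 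Moreover, $v$ inherits nonnegativity and nontriviality from $u$ and lies in $\Dsp$ by the same change of variables applied to each seminorm.

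The step requiring the most care is the rigorous derivation of the Euler--Lagrange equation: I would need an integrable majorant for the difference quotient of the $p_{i}$-integrand when perturbing in a general direction $\varphi\in\Dsp$, which rests on elementary inequalities of the form $\bigl||a|^{p_{i}-2}a-|b|^{p_{i}-2}b\bigr|\lesssim(|a|+|b|)^{p_{i}-2}|a-b|$ combined with Hölder's inequality on the anisotropic kernel. Once this is in place, the anisotropic rescaling that removes $\mu$ is a routine computation.
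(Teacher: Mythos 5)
Your proposal is correct and follows essentially the same route as the paper: extract the nonnegative minimizer from \Cref{thm:existence}, derive the Euler--Lagrange equation via a Lagrange multiplier, observe that the resulting multiplier $\mu$ is positive, and absorb it by the anisotropic dilation $v(x)=u(\mu^{-1/(s_1p_1)}x_1,\dots,\mu^{-1/(s_np_n)}x_n)$, using the homogeneity $(-\partial_{ii})^{s_i}_{p_i}\bigl(u(\lambda\,\cdot)\bigr)(x)=\lambda^{s_ip_i}\,\bigl((-\partial_{ii})^{s_i}_{p_i}u\bigr)(\lambda x)$ for a dilation in the $i$-th variable. You are in fact a bit more careful than the paper about the precise value of the multiplier: testing with $\varphi=u$ shows $\mu=\sum_i\|D^{s_i}_{p_i}u\|^{p_i}_{L^{p_i}}$, which need not equal $S=\sum_i\frac{1}{p_i}\|D^{s_i}_{p_i}u\|^{p_i}_{L^{p_i}}$ unless all $p_i$ coincide, but only the positivity of $\mu$ is used, so the argument is unaffected.
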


We first comment on known results from the literature before we explain the novelty of our results. For $s\in(0,1)$ and $p\in (1,\infty)$, we define the
normalized Gagliardo–Slobodecki\u{ı} seminorm as
\[ [u]_{W^{s,p}(\R^n)} := \left(s(1-s) \int_{\R^n}\int_{\R^n} \frac{|u(x)-u(y)|^p}{|x-y|^{n+sp}}\, \d x \, \d y\right)^{1/p} \]
and for $s=1$ and $p\in(1,\infty)$, we define
\[ [u]_{W^{1,p}(\R^n)} := \|\nabla u\|_{L^p(\R^n)}. \]
The homogeneous Sobolev space is then given as the completion of $C_c^{\infty}(\R^n)$ with respect to the $W^{s,p}(\R^n)$-seminorm. In the subconformal case $sp<n$, it can be represented as the function space
 \begin{equation*}
\mathcal{D}^{s, p}(\mathbb{R}^n) = \lbrace u \in L^{p^\ast}(\mathbb{R}^n): [u]_{W^{s, p}(\mathbb{R}^n)} < \infty \rbrace,
\end{equation*}
where $p^\ast = np/(n-sp)$. For details on homogeneous Sobolev spaces, we refer the reader to \cite{BGV} and the references therein.

The existence of a minimizer of the Sobolev quotient
\begin{equation} \label{eq:Sobolev_quotient}
\inf_{u \in \mathcal{D}^{s, p}(\mathbb{R}^n) \setminus \lbrace 0 \rbrace} \frac{[u]_{W^{s, p}(\mathbb{R}^n)}^p}{\|u\|_{L^{p^\ast}(\mathbb{R}^n)}^p}
\end{equation}
is an important problem in the analysis of variational problems. One reason for this is that minimizers of \eqref{eq:Sobolev_quotient} satisfy the quasilinear equation involving the critical exponent
\begin{equation} \label{eq:semilinear_eq}
(-\Delta)^s_p u = |u|^{p^\ast-2}u \quad \text{in}~ \mathbb{R}^n.
\end{equation}
For $s=1$ and $p=2$, this equation \eqref{eq:semilinear_eq} is related to the Yamabe problem, which addresses a question on the scalar curvature of Riemannian manifolds. 
For $s=1$, the problem of existence of minimizers to \eqref{eq:Sobolev_quotient} in the case $p\in(1,n)$ was completely answered in \cite{Aub76c,Tal76}.
Not only the existence of minimizers to \eqref{eq:Sobolev_quotient} is proved, but also their explicit form is shown to be (up to translation and scaling) 
\begin{equation*}
u(x) = \left(1+|x|^{\frac{p}{p-1}}\right)^{\frac{p-n}{p}}.
\end{equation*}
Let us now comment on results in the fractional case $s\in(0,1)$. 
In the linear case $p=2$, the authors in \cite{CT04} obtain sharp constants for Sobolev inequalities. They prove the existence of minimizers and prove that 
they are (again up to translation and scaling) of the form 
\begin{equation} \label{eq:minimizernonlocal}
u(x) = \left(1+|x|^{\frac{p}{p-1}}\right)^{\frac{sp-n}{p}}.
\end{equation}
In the case $p\in(1,n/s)$, the existence of minimizers is proved in \cite{BMS16}.
Furthermore, it is conjectured that minimizers are of the form \eqref{eq:minimizernonlocal}, see \cite[Equation (1.9)]{BMS16}. The authors provide the asymptotic behavior of minimizers, but their explicit form remains still open. 

The aim of this paper is to show the existence of nonnegative minimizers to \eqref{eq:min_prob} and as a consequence prove the existence of nontrivial nonnegative solutions to the corresponding critical problem \eqref{eq:main_eq}. 
The results in this paper can be seen as a combination of the results in \cite{EHR07} and \cite{BSS18}. Our approach is based on an appropriate adjustment of the concentration-compactness principle. Let us briefly discuss the results in \cite{EHR07} and \cite{BSS18} before we address results concerning the concentration-compactness principle.
In \cite{EHR07}, the authors study the anisotropic $p$-Laplacian \eqref{eq:locaniso}. They prove that nonnegative minimizers of
\begin{equation*}
\inf_{u \in \mathcal{D}^{1,\vec{p}}(\mathbb{R}^n), \|u\|_{L^{q}(\mathbb{R}^n)}=1} \sum_{i=1}^n \frac{1}{p_i} \|D_i u\|_{L^{p_i}(\R^n)}^{p_i},
\end{equation*}
exist, where $\vec{p} = (p_1, \dots, p_n)$ and $q$ are given as in \eqref{eq:p}, and
\begin{equation*}
\mathcal{D}^{1,\vec{p}}(\mathbb{R}^n) = \lbrace u \in L^{q}(\mathbb{R}^n): |D_i u| \in L^{p_i}(\R^n) \rbrace.
\end{equation*}
The main tool in \cite{EHR07} is an adaption of the concentration-compactness principle. The existence of a nonnegative minimizer implies the existence of a nonnegative solution to the corresponding anisotropic critical problem. \\
In \cite{BSS18}, the authors extend the concentration-compactness principle for the fractional \\
$p$-Laplacian in unbounded domains. Using the concentration-compactness principle, they provide sufficient conditions for the existence of a nontrivial solution to the generalized fractional Brezis--Nirenberg problem.\\ Combining ideas from the anisotropic local case as in \cite{EHR07} and from the nonlocal case as in \cite{BSS18} allows us to prove the concentration-compactness principle for the anisotropic nonlocal operator in \eqref{eq:critintro} and the existence of solutions to the corresponding critical problem.

\begin{remark}
We would like to emphasize that our method of proof --- up to straightforward modifications --- carries over to certain anisotropic operators with a possibly partially orthotropic structure, as for example
\begin{equation}
\label{eq:genoperatorex}
\left( - \partial_{11} - \partial_{22}\right)^{s}_{p} + (-\partial_{33})^{t}_{q},
\end{equation}
where $n = 3$, $p,q > 1$, $s,t \in (0,1)$. In case $\vec{s} = (s,s,t), \vec{p} = (p,p,q)$ satisfy \Cref{assumption:sp}, one can prove analogues of  \Cref{thm:sobolev}, \autoref{thm:existence} and \Cref{cor:existence} for the operator in \eqref{eq:genoperatorex}.
\end{remark}

We would like to mention that the classification of nonnegative minimizers \eqref{eq:min_prob} remains an open problem. To the best of our knowledge, even in the anisotropic local case such as \eqref{eq:locaniso}, the shape of minimizers is not known. We do not even know whether such classification is possible in the anisotropic case.  

Let us now refer to results on the concentration-compactness principle in the literature.
As already mentioned, the concentration-compactness principle (CCP) has been introduced by P.-L. Lions in a series of papers. In \cite{Lions3} the CCP was introduced for bounded domains. It has been extended to unbounded domains by Chabrowski in \cite{C95} (see also \cite{BCS95,BTW96}).
For the fractional $p$-Laplacian on bounded domains, the CCP was established in the linear case $p=2$ in \cite{Pala14} and for general $1<p<n/s$ in \cite{MoscSqua16}. The CCP for the fractional $p$-Laplacian was extended to unbounded domains in \cite{BSS18}.

We briefly refer to further results addressing the CCP. In \cite{PQR21}, the authors obtain a CCP in the linear case $p=2$ for a class of stable processes in $\R^n$.
Concerning the $p$-Laplacian resp. the fractional $p$-Laplacian, we refer the reader to \cite{GP91, BMS16}. For results with regard to the fractional $(p,q)$-Laplacian, see \cite{BM19,AAI19,Amb20} and the references therein. The CCP for local operators with variable exponents is studied in \cite{AB13,FBS10,Fu09,FZ10,HS16,HKS19} and the nonlocal case with variable exponents can be found in \cite{HK21}.

The article is organized as follows. In \Cref{sec:Sob}, we prove an anisotropic fractional Sobolev inequality and a compact embedding theorem. 
\Cref{sec:CCP} establishes the CCP and is divided into three subsections. In \Cref{ssec:Decay}, we study the decay of cutoff functions, while \Cref{ssec:reverse} contains the proof of reverse Hölder inequalities. Finally, in \Cref{ssec:main} we prove the main results of this paper.

\section{Sobolev inequality and compact embedding}\label{sec:Sob}
In this section, we prove an anisotropic fractional Sobolev inequality and a compact embedding theorem. 
Even as a self-standing result the anisotropic fractional Sobolev inequality is very interesting. On the one hand, it allows for different orders of integrability $p_1,\dots,p_n>1$ and at the same time also different orders of differentiability $s_1,\dots,s_n\in(0,1)$. Hence, this Sobolev inequality covers plenty of anisotropic nonlocal operators, such as $-(-\partial_{11})^{s_1} - \dots - (-\partial_{nn})^{s_n}$ or fractional orthotropic $p$-Laplacians (see \cite{CK}). 
By comparability of energy forms, one can deduce the Sobolev inequality also for isotropic operators such as the fractional Laplacian $-(-\Delta)^s$. On the other hand, the Sobolev inequality is robust in the sense that the appearing constant depends only on a lower bound of $s_1,\cdots,s_n$. This allows us to recover local isotropic and anisotropic Sobolev inequalities such as Troisi’s inequality, see \eqref{eq:Troisi}. Therefore, \Cref{thm:sobolev} not only extends \eqref{eq:Troisi} to the nonlocal setting, but also provides another proof of it.\\
The compact embedding $\Dsp \subset\subset L^{p}_{\mathrm{loc}}(\R^n)$ for every $1 \le p < p^{\ast}$ is a consequence of the anisotropic  fractional Sobolev inequality and direct computations.\\

The proof of \Cref{thm:sobolev} is based on the boundedness of maximal and sharp maximal operators. For this purpose, we define a class of rectangles taking the anisotropy into account. For $i=1,\dots, n$, set
\begin{equation} \label{eq:m_i}
m_i = \frac{s_{\max}}{s_i} \left( \frac{1}{p_i} - \frac{1}{p^{\ast}} \right) \left( \frac{1}{p_{\max}} - \frac{1}{p^{\ast}} \right)^{-1}.
\end{equation}
Note that $m_i \geq 1$ by the assumption $p_{\max} < p^{\ast}$. For $x \in \mathbb{R}^n$ and $\rho > 0$, we define
\begin{equation} \label{eq:M}
M_{\rho}(x) = \bigtimes_{i=1}^n \left(x_i-\rho^{m_i},x_i+\rho^{m_i} \right).
\end{equation}
The sets $M_{\rho}(x)$ are balls with respect to the metric $d_M$ defined by
\begin{equation*}
d_M(x,y) = \sup_{i = 1,\dots,n} \vert x_i-y_i\vert^{1/m_i}, \quad x,y \in \R^n.
\end{equation*}
Note that $d_M$ is a metric since $m_i \ge 1$. An important ingredient in the proof of \Cref{thm:sobolev}, is the fact that $(\R^n, d_M)$ equipped with the Lebesgue measure is a doubling metric measure space. Since
\begin{equation*}
|M_{2\rho}| = 2^n (2\rho)^{\sum_{i=1}^n m_i} = 2^{\sum_{i=1}^n m_i} |M_{\rho}| = 2^{s_{\max}/(\frac{1}{p_{\max}}-\frac{1}{p^{\ast}})}|M_{\rho}| \leq 2^{\frac{(p^{\ast})^2}{p^{\ast}-p_{\max}}}|M_{\rho}|,
\end{equation*}
the doubling constant depends on $p^{\ast}$ and $p^{\ast}-p_{\max}$.

Let us recall the Hardy–Littlewood maximal function and sharp maximal function: for $u \in L^1_{\mathrm{loc}}(\mathbb{R}^n)$,
\begin{equation*}
{\bf M}u(x) = \sup_{\rho > 0} \fint_{M_\rho(x)} u(y) \,\d y \quad\text{and}\quad {\bf M}^{\sharp}u(x) = \sup_{\rho>0} \fint_{M_\rho(x)} |u(y) - (u)_{M_\rho(x)}| \,\d y,
\end{equation*}
where $(u)_{M_{\rho}(x)} = \fint_{M_{\rho}(x)} u(z)\,\d z$. It is well known that the maximal inequality and sharp maximal inequality hold true on doubling metric spaces, see \cite{Hein01,GrafakosMF,CK}.

\begin{proof} [Proof of \Cref{thm:sobolev}]
Let $u \in \mathcal{D}^{\vec{s},\vec{p}}(\mathbb{R}^n)$, then the maximal function ${\bf M}u$ and sharp maximal function ${\bf M}^{\sharp}u$ are well defined. Following the proof of \cite[Theorem 2.4]{CK}, one obtains
\begin{equation} \label{eq:F_i}
\fint_{M_\rho(x)} |u(y) - (u)_{M_\rho(x)}|\, \d y \leq \sum_{i=1}^n \rho^{s_i m_i} \fint_{M_{\rho}(x)} F_i(y) \,\mathrm{d}y,
\end{equation}
where the function $F_i$ is defined by
\begin{equation*}
F_i(y) := \sup_{\rho > 0} \left( 2\rho^{-s_im_i} \fint_{-2\rho^{m_i}}^{2\rho^{m_i}} |u(y) - u(y+he_i)| \,\mathrm{d}h \right).
\end{equation*}
By H\"older's inequality, we have
\begin{equation} \label{eq:F_i_Holder}
\begin{split}
\left( \fint_{M_\rho(x)} F_i(y) \,\mathrm{d}y \right)^{p^{\ast}} 
&\leq \left( \fint_{M_{\rho}(x)} F_i^{p_i}(y) \,\mathrm{d}y \right)^{\frac{{p^{\ast}}-p_i}{p_i}} \left( \fint_{M_{\rho}(x)} F_i(y) \,\mathrm{d}y \right)^{p_i} \\
&\leq |M_{\rho}|^{-\frac{{p^{\ast}}-p_i}{p_i}} \|F_i\|_{L^{p_i}(\mathbb{R}^n)}^{{p^{\ast}}-p_i} \left( \fint_{M_{\rho}(x)} F_i(y) \,\mathrm{d}y \right)^{p_i}.
\end{split}
\end{equation}
Thus, it follows from \eqref{eq:F_i} and \eqref{eq:F_i_Holder} that
\begin{equation*}
\begin{split}
&\left( \fint_{M_\rho(x)} |u(y) - (u)_{M_\rho(x)}|\,\mathrm{d}y \right)^{p^{\ast}} \\
&\leq \sum_{i=1}^n n^{{p^{\ast}}-1} \rho^{{p^{\ast}}s_im_i} \left( \fint_{M_{\rho}(x)} F_i(y) \,\mathrm{d}y \right)^{p^{\ast}} \\
&\leq \sum_{i=1}^n n^{{p^{\ast}}-1} \rho^{{p^{\ast}}s_i m_i} |M_{\rho}|^{-\frac{{p^{\ast}}-p_i}{p_i}} \|F_i\|_{L^{p_i}(\mathbb{R}^n)}^{{p^{\ast}}-p_i} \left( \fint_{M_{\rho}(x)} F_i(y) \,\mathrm{d}y \right)^{p_i}.
\end{split}
\end{equation*}
Note that for each $i$,
\begin{equation*}
p^{\ast} s_im_i - \frac{p^{\ast}-p_i}{p_i} \sum_{j=1}^n m_j = 0.
\end{equation*}
Therefore, we arrive at
\begin{equation*}
\left( \fint_{M_\rho(x)} |u(y) - (u)_{M_\rho(x)}|\,\mathrm{d}y \right)^{p^{\ast}} \leq \sum_{i=1}^n \frac{n^{{p^{\ast}}-1}}{2^{n\frac{{p^{\ast}}-p_i}{p_i}}} \|F_i\|_{L^{p_i}(\mathbb{R}^n)}^{{p^{\ast}}-p_i} \left( \fint_{M_{\rho}(x)} F_i(y) \,\mathrm{d}y \right)^{p_i}.
\end{equation*}
We take the supremum over $\rho > 0$ and use the estimate $2^{n\frac{p^{\ast}-p_i}{p_i}} \geq 1$, then
\begin{equation*}
\left( {\bf M}^\sharp u(x) \right)^{p^{\ast}} \leq n^{p^{\ast}-1} \sum_{i=1}^n \|F_i\|_{L^{p_i}(\mathbb{R}^n)}^{{p^{\ast}}-p_i} ({\bf M}F_i(x))^{p_i}.
\end{equation*}
By boundedness of the sharp maximal operator in $L^{p^{\ast}}(\R^n)$ and the maximal operator in $L^{p_i}(\R^n)$,
\begin{equation*}
\|u\|_{L^{p^{\ast}}(\mathbb{R}^n)}^{p^{\ast}} \leq C \|{\bf M}^\sharp u\|_{L^{p^{\ast}}(\mathbb{R}^n)}^{p^{\ast}} \leq C \sum_{i=1}^n \|F_i\|_{L^{p_i}(\mathbb{R}^n)}^{{p^{\ast}}-p_i} \|{\bf M}F_i\|_{L^{p_i}(\R^n)}^{p_i} \leq C \sum_{i=1}^n \|F_i\|_{L^{p_i}(\mathbb{R}^n)}^{p^{\ast}}
\end{equation*}
for some $C > 0$. The constant $C$ depends on the doubling constant and the number of intersections of anisotropic dyadic rectangles as in \cite[Appendix B]{CK}. Therefore, $C = C(n, p^{\ast}, p^{\ast}-p_{\max}, s_0)$.

It only remains to show that there is a $C=C(n,p^{\ast}, p^{\ast}-p_{\max}, s_0)>0$ such that
\begin{equation*}
\|F_i\|_{L^{p_i}(\mathbb{R}^n)}^{p_i} \leq C s_i(1-s_i) \int_{\mathbb{R}^n} \int_{\mathbb{R}} \frac{|u(x)-u(x+he_i)|^{p_i}}{|h|^{1+s_ip_i}} \,\mathrm{d}h \,\mathrm{d}x
\end{equation*}
for each $i=1,\dots, n$, which follows exactly as in \cite[Lemma 2.3 and Theorem 2.4]{CK}.
\end{proof}

\begin{theorem}[Compact Embedding] \label{thm:compemb}
Let $s_1,\dots,s_n\in[s_0,1)$ for some $s_0\in(0,1)$ and $p_1,\dots,p_n>1$ be such that \Cref{assumption:sp} holds. Then
$\Dsp \subset\subset L^{p}_{\mathrm{loc}}(\R^n)$ for every $1 \le p < p^{\ast}$.
\end{theorem}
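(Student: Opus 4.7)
I would combine the Sobolev embedding from \Cref{thm:sobolev} with the Riesz--Fréchet--Kolmogorov compactness criterion and an interpolation argument. Let $\{u_k\} \subset \Dsp$ be a bounded sequence. By \Cref{thm:sobolev} it is bounded in $L^{p^\ast}(\R^n)$, hence in $L^{p^\ast}(K)$ for every compact $K \subset \R^n$. It therefore suffices to show that $\{u_k|_K\}$ is relatively compact in $L^1(K)$: once a subsequence $u_{k_j}$ is Cauchy in $L^1(K)$, the interpolation
\[
\|u_{k_j} - u_{k_\ell}\|_{L^p(K)} \le \|u_{k_j} - u_{k_\ell}\|_{L^1(K)}^{\theta}\, \|u_{k_j} - u_{k_\ell}\|_{L^{p^\ast}(K)}^{1-\theta}, \qquad \tfrac{1}{p} = \theta + \tfrac{1-\theta}{p^\ast},
\]
together with the $L^{p^\ast}$-bound gives a Cauchy sequence in $L^p(K)$ for every $1 \le p < p^\ast$, and a diagonal argument over an exhaustion $K_m \nearrow \R^n$ yields convergence in $L^p_{\mathrm{loc}}(\R^n)$.

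\textbf{Key translation estimate.} The main analytic ingredient is the one-directional translation estimate
\[
\|u(\cdot + \delta e_i) - u\|_{L^{p_i}(\R^n)}^{p_i} \le \frac{C(p_i)}{s_i(1-s_i)}\, \delta^{s_i p_i}\, \|D^{s_i}_{p_i} u\|_{L^{p_i}(\R^n)}^{p_i}, \qquad \delta > 0.
\]
I would prove it by an averaging trick: starting from
\[
|u(x+\delta e_i) - u(x)|^{p_i} \le 2^{p_i-1}\bigl(|u(x+\delta e_i) - u(x+t e_i)|^{p_i} + |u(x+t e_i) - u(x)|^{p_i}\bigr),
\]
I average $t$ over $(\delta, 2\delta)$ and integrate in $x$. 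The second summand gives $\int_\delta^{2\delta}\!\int |u(x+te_i)-u(x)|^{p_i}\,dx\,dt$, which using the pointwise bound $t^{1+s_i p_i} \le (2\delta)^{1+s_i p_i}$ on $(\delta,2\delta)$ is controlled by $(2\delta)^{1+s_i p_i}(s_i(1-s_i))^{-1}\|D^{s_i}_{p_i} u\|_{L^{p_i}}^{p_i}$. For the first summand the change of variables $y = x+\delta e_i$, $\tau = t-\delta \in (0,\delta)$ reduces it to a similar integral bounded the same way, now via $\tau^{1+s_i p_i} \le \delta^{1+s_i p_i}$. Dividing by the averaging length $\delta$ produces the desired $\delta^{s_i p_i}$-scaling.

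\textbf{Equicontinuity and conclusion.} For a general translation vector $y$, I would telescope $u(\cdot + y) - u = \sum_{i=1}^n\bigl(u(\cdot + \sum_{j\le i} y_j e_j) - u(\cdot + \sum_{j<i} y_j e_j)\bigr)$ to reduce to one coordinate at a time, and apply Hölder's inequality on $K$ together with the translation estimate to obtain
\[
\|u(\cdot + y) - u\|_{L^1(K)} \le C(K,\vec{s},\vec{p})\sum_{i=1}^n |y_i|^{s_i}\, \|D^{s_i}_{p_i} u\|_{L^{p_i}(\R^n)},
\]
which vanishes as $|y| \to 0$ uniformly over bounded subsets of $\Dsp$. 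Combined with $L^1(K)$-boundedness inherited from $L^{p^\ast}(K)$, the Fréchet--Kolmogorov theorem produces a subsequence converging in $L^1(K)$; the interpolation and diagonalization outlined in the first paragraph then finish the proof.

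\textbf{Main obstacle.} The principal technical point is the translation estimate; everything else is standard machinery. A minor subtlety is the choice to average over $(\delta, 2\delta)$ rather than $(0,\delta)$ so that the singular weight $|t|^{-1-s_i p_i}$ is dominated by $\delta^{-1-s_i p_i}$ uniformly in $t$; the interval $(0,\delta)$ only enters through the change of variables in the companion summand, where the missing factor of $\delta^{s_i p_i}$ is supplied by the corresponding $L^\infty$-bound on $\tau^{1+s_i p_i}$.
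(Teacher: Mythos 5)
Your proof is correct and follows essentially the same route as the paper's: Sobolev embedding for the $L^{p^\ast}(\R^n)$ bound, reduction to $L^1$-compactness via Hölder interpolation, a one-directional translation estimate derived by an averaging trick, telescoping over coordinate directions, and the Riesz--Fréchet--Kolmogorov theorem. The only cosmetic difference is that you average over $(\delta,2\delta)$ to first obtain a global $L^{p_i}(\R^n)$ translation estimate and then restrict to $K$ by Hölder, whereas the paper averages over $(-|r|,|r|)$ and works directly with the $L^1$-estimate on a shrunken domain $\Omega_r$, applying Hölder to split off the singular kernel.
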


\begin{proof}
We follow the proof in \cite[Theorem 4.54]{DD12}.\\
Let $\mathcal{B} \subset \Dsp$ be bounded and $\Omega \subset \R^n$ be an open, bounded set. By \Cref{thm:sobolev} we know that $\mathcal{B}$ is bounded in $L^{p^{\ast}}(\R^n)$. Therefore, by a standard H\"older interpolation argument, it is sufficient to prove that $\mathcal{B}$ is precompact in $L^1(\Omega)$ in order to deduce the desired result. We will show that there is $C > 0$ such that for every $r \in \R$, $i \in \{1,\dots,n\}$, $u \in \mathcal{B}$ it holds:
\begin{equation}
\label{eq:compembHelp1}
\int_{\Omega_r} \vert u(x+r e_i) - u(x) \vert\,  \d x \le C \vert r \vert^{s_i} \vert \Omega \vert^{(p_i-1)/p_i} \Vert D^{s_i}_{p_i} u \Vert_{L^{p_i}(\R^n)},
\end{equation}
where $\Omega_r = \{ x \in \R^n : \dist(x,\partial \Omega) > \vert r\vert \}$.
By the triangle inequality, we have
\begin{equation*}
\begin{split}
&2\vert r\vert \int_{\Omega_r} \vert u(x+r e_i) - u(x) \vert \, \d x \\
& \le \int_{\Omega_r} \int_{(-\vert r\vert,\vert r \vert)}\vert u(x+r e_i) - u(x+t e_i) \vert + \vert u(x+te_i) - u(x) \vert \, \d t \, \d x  \\
&= \int_{\Omega_r} \int_{(-\vert r\vert,\vert r\vert)}\frac{\vert u(x+r e_i) - u(x+t e_i) \vert}{\vert t-r\vert^{\frac{1+s_ip_i}{p_i}}} \vert t-r\vert^{\frac{1+s_ip_i}{p_i}} \, \d t \, \d x \\
& \qquad\qquad+ \int_{\Omega_r} \int_{(-\vert r \vert,\vert r\vert)} \frac{\vert u(x+te_i) - u(x) \vert}{\vert t \vert^{\frac{1+s_ip_i}{p_i}}} \vert t \vert^{\frac{1+s_ip_i}{p_i}}\, \d t \, \d x.
\end{split}
\end{equation*}
Both expressions on the right-hand side can be estimated from above in a similar way. We demonstrate how to treat the second expression. 
By H\"older's inequality
\begin{equation*}
\begin{split}
\int_{\Omega_r} &\int_{(-\vert r \vert,\vert r\vert)} \frac{\vert u(x+te_i) - u(x) \vert}{\vert t \vert^{\frac{1+s_ip_i}{p_i}}} \vert t \vert^{\frac{1+s_ip_i}{p_i}}\, \d t \, \d x\\
&\le \left(\int_{\Omega_r} \int_{(-\vert r\vert,\vert r\vert)} \frac{\vert u(x+te_i) - u(x) \vert^{p_i}}{\vert t \vert^{1+s_ip_i}} \, \d t \, \d x \right)^{1/p_i} \left( \int_{\Omega_r} \int_{(-\vert r\vert,\vert r\vert)} \vert t \vert^{\frac{1+s_ip_i}{p_i-1}} \, \d t \,  \d x \right)^{\frac{p_i-1}{p_i}}\\
&\le C\vert r \vert^{1+s_i} \vert \Omega \vert^{1-1/p_i} \Vert D^{s_i}_{p_i} u \Vert_{L^{p_i}(\R^n)} 
\end{split}
\end{equation*}
for some $C>0$. This proves \eqref{eq:compembHelp1}. Now let $h \in B_1(0)$.
Then
\begin{equation*}
\begin{split}
& \int_{\Omega_{2|h|}} \vert u(x+h) - u(x) \vert \, \d x \\
&\leq \sum_{i=1}^n \int_{\Omega_{2|h|}} \vert u(x+h_1e_1+\cdots+h_i e_i) - u(x+h_1e_1+\cdots+h_{i-1}e_{i-1}) \vert \, \d x \\
&= \sum_{i=1}^n \int_{h_1e_1+\cdots+h_{i-1}e_{i-1}+\Omega_{2|h|}} \vert u(x+h_i e_i) - u(x) \vert \, \d x.
\end{split}
\end{equation*}
Since $h_1e_1+\dots+h_{i-1}e_{i-1}+\Omega_{2|h|} \subset \Omega_{h_i}$ for every $i \in \{1,\dots,n\}$, we have
\begin{equation*}
\begin{split}
\int_{\Omega_{2|h|}} \vert u(x+h) - u(x) \vert \, \d x
&\leq \sum_{i=1}^n \int_{\Omega_{h_i}} \vert u(x+h_i e_i) - u(x) \vert \, \d x \\
&\le C \vert h \vert^{s_{0}} \max \lbrace 1, \vert \Omega \vert^{1-1/p_{\max}} \rbrace \Vert u \Vert_{\Dsp}.
\end{split}
\end{equation*}
for some $C>0$. By boundedness of $\mathcal{B}$ in $\Dsp$, the right-hand side vanishes uniformly as $h \searrow 0$. Since $\mathcal{B}$ is also bounded in $L^1(\Omega)$, we can deduce precompactness of $\mathcal{B}$ in $L^1(\Omega)$ from the Riesz–Fr\'echet–Kolmogorov theorem (c.f. \cite[Theorem 1.95]{DD12}).
\end{proof}

\section{Concentration-compactness principle} \label{sec:CCP}

In this section we present the concentration-compactness principle for solving the minimization problem \eqref{eq:min_prob}. 
Throughout this section, $\vec{s}$ and $\vec{p}$ always satisfy \Cref{assumption:sp}.

Recall that
\begin{equation*}
S= \inf_{u \in \mathcal{D}^{\vec{s},\vec{p}}(\mathbb{R}^n), \|u\|_{L^{p^\ast}(\mathbb{R}^n)}=1} \sum_{i=1}^n \frac{1}{p_i} \Vert D^{s_i}_{p_i}u\Vert_{L^{p_i}(\R^n)}^{p_i},
\end{equation*}
which is positive by \Cref{thm:sobolev}. Furthermore, by definition of $S$, we have the following lemma.
\begin{lemma}
\label{lem:SobCor}
For every $u \in \Dsp$ with $\Vert u \Vert_{L^{p^{\ast}}(\R^n)} \le 1$, it holds that
\begin{equation*}
S \Vert u \Vert_{L^{p^{\ast}}(\R^n)}^{p_{\max}} \le \sum_{i=1}^n \frac{1}{p_i} \Vert D_{p_i}^{s_i} u \Vert_{L^{p_i}(\R^n)}^{p_i}.
\end{equation*}
\end{lemma}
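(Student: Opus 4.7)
The statement is really just a homogeneity/rescaling observation, so the plan is short.

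First I would dispose of the trivial case $u\equiv 0$, in which both sides are zero. For $u\neq 0$ set $t := \Vert u\Vert_{L^{p^{\ast}}(\R^n)}\in(0,1]$ and consider the normalized function $v := u/t$, so that $\Vert v\Vert_{L^{p^{\ast}}(\R^n)} = 1$. The key observation is that each functional $u\mapsto\Vert D^{s_i}_{p_i}u\Vert_{L^{p_i}(\R^n)}$ is positively $1$-homogeneous, directly from its definition as the $p_i$-th root of
\[
s_i(1-s_i)\int_{\R^n}\int_{\R}\frac{|u(x)-u(x+he_i)|^{p_i}}{|h|^{1+s_ip_i}}\,\d h\,\d x.
\]
Hence $\Vert D^{s_i}_{p_i}v\Vert_{L^{p_i}(\R^n)}^{p_i} = t^{-p_i}\Vert D^{s_i}_{p_i}u\Vert_{L^{p_i}(\R^n)}^{p_i}$.

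Since $v$ is admissible in the variational problem defining $S$, the definition of $S$ yields
\[
S \;\le\; \sum_{i=1}^n\frac{1}{p_i}\Vert D^{s_i}_{p_i}v\Vert_{L^{p_i}(\R^n)}^{p_i} \;=\; \sum_{i=1}^n\frac{1}{p_i}\,t^{-p_i}\Vert D^{s_i}_{p_i}u\Vert_{L^{p_i}(\R^n)}^{p_i}.
\]
Multiplying both sides by $t^{p_{\max}}$ gives
\[
S\,t^{p_{\max}} \;\le\; \sum_{i=1}^n\frac{1}{p_i}\,t^{p_{\max}-p_i}\Vert D^{s_i}_{p_i}u\Vert_{L^{p_i}(\R^n)}^{p_i}.
\]
Since $t\in(0,1]$ and $p_{\max}-p_i\ge 0$ for every $i$, one has $t^{p_{\max}-p_i}\le 1$, and the claimed inequality follows by dropping these factors term by term.

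There is no genuine obstacle here; the only subtlety is that the exponent $p_{\max}$ (rather than some $p_i$) is dictated precisely by the constraint $\Vert u\Vert_{L^{p^{\ast}}}\le 1$, which forces us to take the worst exponent in the sum so that $t^{p_{\max}-p_i}\le 1$ holds uniformly in $i$. I would finish by noting that equality need not be attained and that the lemma is exactly in the form needed later to bound $\Vert u\Vert_{L^{p^{\ast}}}$ by the natural anisotropic energy along bounded sequences.
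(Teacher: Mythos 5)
Your proof is correct and follows essentially the same route as the paper: normalize $v = u/\Vert u\Vert_{L^{p^{\ast}}}$, invoke the definition of $S$ together with the $1$-homogeneity of each $\Vert D^{s_i}_{p_i}\cdot\Vert_{L^{p_i}}$, and exploit $\Vert u\Vert_{L^{p^{\ast}}}\le 1$ with $p_i\le p_{\max}$ (the paper writes this as $\Vert u\Vert^{-p_i}\le\Vert u\Vert^{-p_{\max}}$ rather than your equivalent $t^{p_{\max}-p_i}\le 1$).
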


\begin{proof}
Let $u \in \Dsp$ be such that $u \neq 0$ and $\Vert u \Vert_{L^{p^{\ast}}(\R^n)} \le 1$. Let $v = u/\|u\|_{L^{p^{\ast}}(\R^n)}$, then $\|v\|_{L^{p^{\ast}}(\R^n)} = 1$. Thus, by the definition of $S$,
\begin{equation*}
S \leq \sum_{i=1}^n \frac{1}{p_i} \Vert D_{p_i}^{s_i} v \Vert_{L^{p_i}(\R^n)}^{p_i} = \sum_{i=1}^n \frac{1}{p_i} \frac{\Vert D_{p_i}^{s_i} u \Vert_{L^{p_i}(\R^n)}^{p_i}}{\|u\|_{L^{p^{\ast}}(\R^n)}^{p_i}} \leq \sum_{i=1}^n \frac{1}{p_i} \frac{\Vert D_{p_i}^{s_i} u \Vert_{L^{p_i}(\R^n)}^{p_i}}{\|u\|_{L^{p^{\ast}}(\R^n)}^{p_{\max}}}.
\end{equation*}
\end{proof}
The following scaling property is a consequence of a straightforward computation.
\begin{lemma} \label{lem:scaling}
For $i=1,\dots,n$, let $m_i$ be the exponent defined by \eqref{eq:m_i}. For every $y \in \mathbb{R}^n$, $\rho > 0$, and $u \in \mathcal{D}^{\vec{s},\vec{p}}(\mathbb{R}^n)$, the function $v$ defined by
\begin{equation} \label{eq:v}
v(x) = \rho^{s_{\max}p_{\max}/(p^{\ast}-p_{\max})} u(\rho^{m_1}x_1+y_1, \cdots, \rho^{m_n}x_n+y_n)
\end{equation}
satisfies $\|v\|_{\mathcal{D}^{\vec{s},\vec{p}}(\mathbb{R}^n)} = \|u\|_{\mathcal{D}^{\vec{s},\vec{p}}(\mathbb{R}^n)}$ and $\Vert v \Vert_{L^{p^{\ast}}(\R^n)} = \Vert u \Vert_{L^{p^{\ast}}(\R^n)}$.
\end{lemma}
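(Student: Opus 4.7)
The plan is a direct change-of-variables computation, so the real task is only to verify that the chosen exponent $\alpha := s_{\max} p_{\max}/(p^{\ast}-p_{\max})$ and the exponents $m_i$ from \eqref{eq:m_i} are precisely those that make every $\rho$-factor cancel. Let $T:\R^n\to\R^n$ be the affine bijection $T(x) = (\rho^{m_1}x_1+y_1,\dots,\rho^{m_n}x_n+y_n)$, so that $v = \rho^{\alpha}\, u \circ T$ and the Jacobian of $T$ is $\rho^{\sum_j m_j}$.

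First I would handle the $L^{p^{\ast}}$-invariance. Applying the substitution $z=T(x)$ gives $\|v\|_{L^{p^{\ast}}(\R^n)}^{p^{\ast}} = \rho^{\alpha p^{\ast}-\sum_j m_j}\|u\|_{L^{p^{\ast}}(\R^n)}^{p^{\ast}}$, so the claim reduces to the algebraic identity $\alpha p^{\ast} = \sum_{j=1}^n m_j$. Unwinding \eqref{eq:m_i}, this is equivalent to $\sum_{i=1}^n \frac{1}{s_i}\bigl(\frac{1}{p_i}-\frac{1}{p^{\ast}}\bigr) = 1$, which in turn follows from the definitions of $\bar s$ and $\overline{sp}$ together with the relation $\frac{1}{p^{\ast}} = \frac{\bar s}{\overline{sp}} - \frac{\bar s}{n}$ that is built into the definition of $p^{\ast}$.

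Next I would compute each seminorm $\|D^{s_i}_{p_i}v\|_{L^{p_i}(\R^n)}^{p_i}$. Since only the $i$-th coordinate is shifted by $h e_i$, one has $v(x+he_i) = \rho^{\alpha}u(T(x)+\rho^{m_i}h e_i)$, and the double substitution $z = T(x)$, $\tilde h = \rho^{m_i}h$ produces a prefactor $\rho^{\alpha p_i + m_i s_i p_i - \sum_j m_j}$ in front of $\|D^{s_i}_{p_i}u\|_{L^{p_i}(\R^n)}^{p_i}$. The remaining identity to verify is therefore $\alpha p_i + m_i s_i p_i = \sum_j m_j$, which, combined with $\sum_j m_j = \alpha p^{\ast}$ established above, is exactly the relation $p^{\ast}s_i m_i = \frac{p^{\ast}-p_i}{p_i}\sum_j m_j$ already used in the proof of \Cref{thm:sobolev}; it is immediate from \eqref{eq:m_i}. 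Summing the resulting equalities over $i$ then yields $\|v\|_{\Dsp} = \|u\|_{\Dsp}$.

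The argument is entirely a bookkeeping exercise on the exponents, so there is no genuine obstacle; the only real risk is an indexing slip when unwinding the definitions of $m_i$, $\bar s$, $\overline{sp}$, and $p^{\ast}$.
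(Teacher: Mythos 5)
Your proposal is correct and is precisely the direct change-of-variables computation that the paper leaves to the reader ("a straightforward computation"). The two exponent identities you reduce to, namely $\alpha p^{\ast}=\sum_j m_j$ (equivalent to $\sum_i\frac{1}{s_i}\bigl(\frac{1}{p_i}-\frac{1}{p^\ast}\bigr)=1$, which follows from $\frac{1}{p^\ast}=\frac{\bar s}{\overline{sp}}-\frac{\bar s}{n}$) and $\alpha p_i+m_i s_i p_i=\sum_j m_j$, both hold and indeed coincide with the relation $p^{\ast}s_im_i=\frac{p^{\ast}-p_i}{p_i}\sum_j m_j$ used in the proof of the Sobolev inequality; just note at the end that each equality $\|D^{s_i}_{p_i}v\|_{L^{p_i}}^{p_i}=\|D^{s_i}_{p_i}u\|_{L^{p_i}}^{p_i}$ should be raised to the power $1/p_i$ before summing, since $\|\cdot\|_{\Dsp}$ is the sum of the (unraised) $L^{p_i}$-norms.
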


To prove the existence of a minimizer of $S$, let us take a minimizing sequence $(u_k) \subset \Dsp$ for $S$ satisfying $\|u_k\|_{L^{p^{\ast}}(\mathbb{R}^n)} = 1$ for every $k \in \N$. We define the L\'evy concentration function
\begin{equation*}
Q_k(\rho) = \sup_{y \in \mathbb{R}^n} \int_{M_{\rho}(y)} |u_k|^{p^{\ast}} \, \d x, \quad \rho > 0,
\end{equation*}
where $M_{\rho}(y)$ is the rectangle defined by \eqref{eq:M}. For each $k \in \mathbb{N}$, there exists $\rho_k > 0$ such that $Q_k(\rho_k)=1/2$. Moreover, there exists $y_k \in \mathbb{R}^n$ such that
\begin{equation*}
Q_k(\rho_k) = \int_{M_{\rho_k}(y_k)} |u_k|^{p^{\ast}} \, \d x = \frac{1}{2}.
\end{equation*}
By \Cref{lem:scaling}, the function $v_k$ defined by \eqref{eq:v} with $\rho=\rho_k$ satisfies
\begin{equation} \label{eq:sup}
\int_{M_1(0)} |v_k|^{p^{\ast}} \,\mathrm{d}x = \frac{1}{2} = \sup_{y \in \mathbb{R}^n} \int_{M_1(y)} |v_k|^{p^{\ast}} \,\mathrm{d}x.
\end{equation}
Furthermore, the sequence $(v_k)$ is also a minimizing sequence for $S$. By the Banach–Alaoglu theorem and \Cref{thm:sobolev}, we have
\begin{equation} \label{eq:conv_Dsp}
v_k \rightharpoonup v ~ \text{in}~ \Dsp ~\text{and}~ L^{p^{\ast}}(\mathbb{R}^n)
\end{equation}
for some $v \in \Dsp$, up to subsequences. Moreover, we have by  \autoref{thm:compemb} that 
\begin{equation} \label{eq:conv_ptwise}
v_k \to v ~ \text{a.e. in}~ \mathbb{R}^n
\end{equation}
up to subsequences. By the lower semicontinuity of norms $\Vert\cdot\Vert_{\Dsp}$ and $\|\cdot\|_{L^{p^{\ast}}(\mathbb{R}^n)}$, we have
\begin{equation} \label{eq:lower_semicont}
\Vert v\Vert_{\Dsp} \leq \liminf_{k\to\infty} \Vert v_k\Vert_{\Dsp} = S \quad\text{and}\quad \|v\|_{L^{p^{\ast}}(\mathbb{R}^n)} \leq \liminf_{k\to\infty} \|v_k\|_{L^{p^{\ast}}(\mathbb{R}^n)} = 1.
\end{equation}
Therefore, it is enough to prove the following theorem to find a minimizer for $S$.
\begin{theorem} \label{thm:unit_norm}
Let $v:\R^n\to\R$ be the defined as above. Then
\[\|v\|_{L^{p^{\ast}}(\mathbb{R}^n)} = 1.\]
\end{theorem}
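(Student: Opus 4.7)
The plan is to apply the concentration--compactness principle to the Radon measures $\mu_k := |v_k|^{p^\ast}\,\d x$ and $\nu_k := \sum_{i=1}^n \tfrac{1}{p_i}|D^{s_i}_{p_i}v_k|^{p_i}\,\d x$. By extracting a subsequence we may assume $\mu_k \overset{\ast}{\rightharpoonup} \mu$ and $\nu_k \overset{\ast}{\rightharpoonup} \nu$ vaguely. Introducing the escaped-mass quantities
\begin{equation*}
\mu_\infty := \lim_{R\to\infty}\limsup_{k\to\infty}\int_{\R^n\setminus B_R}|v_k|^{p^\ast}\,\d x, \qquad \nu_\infty := \lim_{R\to\infty}\limsup_{k\to\infty}\sum_{i=1}^n \frac{1}{p_i}\int_{\R^n\setminus B_R}|D^{s_i}_{p_i}v_k|^{p_i}\,\d x,
\end{equation*}
one obtains the two mass identities $\mu(\R^n)+\mu_\infty = 1$ and $\nu(\R^n)+\nu_\infty = S$ by testing against a sequence of smooth cutoffs of $B_R$ and sending $k, R \to \infty$.

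The central technical input, which I expect to derive from the cutoff decay estimates of \Cref{ssec:Decay}, the reverse Hölder inequalities of \Cref{ssec:reverse}, and \Cref{lem:SobCor}, is a Lions-type decomposition
\begin{equation*}
\mu = |v|^{p^\ast}\,\d x + \sum_{j\in J}\mu_j\delta_{x_j}, \qquad \nu \geq \sum_{i=1}^n \frac{1}{p_i}|D^{s_i}_{p_i} v|^{p_i}\,\d x + \sum_{j\in J}\nu_j\delta_{x_j},
\end{equation*}
for some at most countable set $\{x_j\}_{j\in J} \subset \R^n$ with nonnegative coefficients, together with the reverse Sobolev-type bounds
\begin{equation*}
\nu_j \geq S\mu_j^{p_{\max}/p^\ast} \quad \text{for all } j \in J, \qquad \nu_\infty \geq S\mu_\infty^{p_{\max}/p^\ast}.
\end{equation*}
Obtaining this decomposition --- and in particular the fact that $\mu - |v|^{p^\ast}\,\d x$ is purely atomic --- in the anisotropic, nonlocal setting is the main obstacle: the cross-terms produced when one multiplies $v_k$ by an anisotropic cutoff $\phi_{x_j,\eps}$ or a cutoff of $B_R$ interact separately with each directional seminorm $D^{s_i}_{p_i}$ with its own order $s_i$ and integrability $p_i$, and handling them is exactly the purpose of the preparatory subsections.

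Once this machinery is in hand, the conclusion follows from a short algebraic argument. Writing $a := \|v\|_{L^{p^\ast}(\R^n)}^{p^\ast} \in [0,1]$ and applying \Cref{lem:SobCor} to $v$ (admissible since $a \leq 1$), we combine with the reverse Hölder bounds to get
\begin{equation*}
S \;=\; \nu(\R^n) + \nu_\infty \;\geq\; S\Bigl(a^{p_{\max}/p^\ast} + \sum_{j\in J}\mu_j^{p_{\max}/p^\ast} + \mu_\infty^{p_{\max}/p^\ast}\Bigr),
\end{equation*}
while the mass identity is $a + \sum_{j\in J}\mu_j + \mu_\infty = 1$. Since \Cref{assumption:sp} gives $p_{\max}/p^\ast < 1$, the inequality $t \leq t^{p_{\max}/p^\ast}$ on $[0,1]$ with equality only at $t \in \{0,1\}$ forces each of $a$, $\mu_\infty$, and the $\mu_j$'s to lie in $\{0,1\}$ with exactly one equal to $1$. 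The normalisation \eqref{eq:sup} rules out the two bad alternatives: if $\mu_\infty = 1$, then $\mu \equiv 0$, so the Portmanteau inequality for compact sets yields $0 = \mu(\overline{M_1(0)}) \geq \limsup_k \mu_k(\overline{M_1(0)}) \geq 1/2$, a contradiction; and if $\mu_{j_0} = 1$ at some $x_{j_0}$, then $\mu = \delta_{x_{j_0}}$ and lower semicontinuity on the open set $M_1(x_{j_0})$ gives $\liminf_k \mu_k(M_1(x_{j_0})) \geq 1$, again contradicting \eqref{eq:sup}. Hence $a = 1$, which is the claim.
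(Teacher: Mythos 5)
Your proposal is correct and follows essentially the same strategy as the paper: establish the two mass identities, combine the Lions-type decomposition of the concentration/escaped masses with the reverse H\"older bounds from \Cref{lem:inftyrevHolder}, \Cref{cor:CC} and \Cref{cor:CC2} together with \Cref{lem:SobCor} applied to $v$, and use the strict concavity of $t\mapsto t^{p_{\max}/p^{\ast}}$ on $[0,1]$ to force all masses into $\{0,1\}$, after which \eqref{eq:sup} rules out both concentration and vanishing. Note only that your labels $\mu,\nu$ (and $\mu_\infty,\nu_\infty$) have their roles interchanged relative to the paper's conventions, and that the paper phrases the endgame as a contradiction from $\|v\|_{L^{p^\ast}}<1$ and rules out $\nu_\infty=1$ directly via $\nu_\infty\le 1/2$ from \eqref{eq:sup} rather than through a Portmanteau argument, but these are presentational differences, not different ideas.
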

This is the main theorem of this section. We will provide the proof at the beginning of \Cref{ssec:main}.

\subsection{Decay of cutoff functions}\label{ssec:Decay}

Let us make a few comments on the significance of cutoff functions in this article. Unlike in the local case, the support of a function is not preserved by the application of a nonlocal operator. This basic fact causes some problems since the compact embedding $\Dsp \subset\subset L^{p}_{\mathrm{loc}}(\R^n)$, $1 \le p < p^{\ast}$, (c.f. \Cref{thm:compemb}) cannot be used directly. In \cite{BSS18}, the authors solve this issue by proving a compact embedding into weighted spaces of the form $L^{p}(\vert D^s \phi(x) \vert^p \d x)$, where $\phi \in C_c^{\infty}(\R^n)$ is a cutoff function. The main observation is that the fractional $(s,p)$-gradient of a compactly supported function decays fast enough for such result to hold true, despite having full support.

The following lemmas discuss the adaption of this argument to the class of anisotropic nonlocal operators considered in this article.

For $x \in \R^n$, $\rho > 0$, and $i\in\{1,\dots,n\}$, we introduce the notation
\begin{equation*}
M_{\rho}^{(i)}(x) = \bigtimes_{j=1}^{i-1} \left(x_j-\rho^{m_j},x_j+\rho^{m_j} \right) \times \R \times \bigtimes_{j=i+1}^{n} \left(x_j-\rho^{m_j},x_j+\rho^{m_j} \right).
\end{equation*}

\begin{lemma}
\label{lem:derdecay}
Let $\phi \in C_c^{\infty}(\R^n)$ be such that $\supp(\phi) \subset M_1(0)$. Then
\begin{equation}
\label{eq:derdecay}
\vert D^{s_i}_{p_i}\phi(x)\vert^{p_i} \le C \min(1,\vert x_i \vert^{-1-s_ip_i}), \quad i = 1,\dots,n,
\end{equation}
where $C = C(n,s_i,p_i,\Vert\phi\Vert_{\infty},\Vert \partial_i \phi\Vert_{\infty})>0$. Moreover, $\supp(\vert D^{s_i}_{p_i}\phi\vert) \subset M_1^{(i)}(0)$.
\end{lemma}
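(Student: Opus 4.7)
The support claim is the short part. If $x \notin M_1^{(i)}(0)$, then there exists some index $j \neq i$ with $|x_j| \geq 1$, so for every $h \in \R$ the point $x + h e_i$ has $j$-th coordinate equal to $x_j$ and therefore lies outside $M_1(0) \supset \supp(\phi)$. Hence $\phi(x) = \phi(x + h e_i) = 0$, and the integrand defining $|D^{s_i}_{p_i}\phi(x)|^{p_i}$ vanishes identically. This gives the second assertion; it also shows that from now on we may restrict attention to $x \in M_1^{(i)}(0)$.

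For the quantitative bound \eqref{eq:derdecay}, I would first establish the uniform estimate $|D^{s_i}_{p_i}\phi(x)|^{p_i} \leq C$ by splitting the $h$-integral at $|h|=1$. On $\{|h| \leq 1\}$ the fundamental theorem of calculus gives $|\phi(x) - \phi(x+he_i)| \leq \|\partial_i \phi\|_\infty |h|$, so the factor $s_i(1-s_i)$ controls the $h$-integral of $|h|^{p_i(1-s_i)-1}$ uniformly in $s_i\in(0,1)$, yielding a bound in terms of $\|\partial_i\phi\|_\infty^{p_i}$. On $\{|h|>1\}$ the crude estimate $|\phi(x)-\phi(x+he_i)| \leq 2\|\phi\|_\infty$ combined with the integrability of $|h|^{-1-s_ip_i}$ at infinity (again absorbing the $s_i$-factor) gives a bound in terms of $\|\phi\|_\infty^{p_i}$. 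Together these produce the first half of the minimum, $|D^{s_i}_{p_i}\phi(x)|^{p_i} \leq C$.

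For the decay in $|x_i|$, I restrict to $|x_i| > 2$ (the region $1 < |x_i| \leq 2$ is already handled by the constant bound since there $|x_i|^{-1-s_ip_i} \geq 2^{-1-s_ip_i}$). Since $x \notin M_1(0)$ in this regime, $\phi(x) = 0$, so
\begin{equation*}
|D^{s_i}_{p_i}\phi(x)|^{p_i} = s_i(1-s_i) \int_{\R} \frac{|\phi(x+he_i)|^{p_i}}{|h|^{1+s_ip_i}} \, \d h.
\end{equation*}
The integrand is supported where $x + he_i \in \supp(\phi) \subset M_1(0)$, i.e.\ where $|x_i + h| < 1$; for such $h$ one has $|h| \geq |x_i| - 1 \geq |x_i|/2$, so $|h|^{-1-s_ip_i} \leq 2^{1+s_ip_i}|x_i|^{-1-s_ip_i}$. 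Bounding $|\phi|$ by $\|\phi\|_\infty$ and noting that the relevant $h$-interval has length $2$, one obtains the desired bound by $C|x_i|^{-1-s_ip_i}$.

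\textbf{Main obstacle.} There is no conceptual obstacle here; the only real care is bookkeeping the $s_i(1-s_i)$ prefactor against the divergences of the $h$-integrals at $0$ and $\infty$, so that the constants remain bounded as $s_i \to 0$ or $s_i \to 1$ (which would matter for later robust applications, even though this lemma's constant is allowed to depend on $s_i$). Once that is handled, the two regimes $|x_i|\lesssim 1$ and $|x_i| \gtrsim 1$ combine to give the stated $\min(1, |x_i|^{-1-s_ip_i})$ bound.
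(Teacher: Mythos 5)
Your proof is correct and follows essentially the same route as the paper's: the same observation for the support claim, the same split at $|h|=1$ for the uniform bound, and the same estimate $|h| \geq |x_i|/2$ on the region $|x_i|>2$ for the decay.
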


\begin{proof}
We first observe that for given $j \in \{1,\dots,n\}$ and $x \in \R^n$ with $\vert x_j \vert > 1$, it holds that $\phi(x+e_ih) = 0$ for every $h \in \R$ and $i \neq j$. It becomes evident that $\supp(\vert D^{s_i}_{p_i}\phi\vert) \subset M_1^{(i)}(0)$ for every $i \in \{1,\dots,n\}$.\\
For $x \in \R^n$ we compute
\begin{equation*}
s_i(1-s_i)\int_{\mathbb{R}} \frac{|\phi(x)-\phi(x+e_ih)|^{p_i}}{|h|^{1+s_i p_i}} \,\d h \le C(p_i,n) \left(\Vert \phi \Vert_{\infty}^{p_i} + \Vert \partial_i \phi\Vert_{\infty}^{p_i}\right).
\end{equation*}
If $\vert x_i \vert > 2$ and $\vert x_i + h\vert < 1$, then $\vert h \vert > \vert x_i\vert /2$. Therefore, we estimate for $x \in \R^n$ with $\vert x_i \vert > 2$:
\begin{equation*}
\begin{split}
s_i(1-s_i)\int_{\mathbb{R}} \frac{|\phi(x)-\phi(x+e_ih)|^{p_i}}{|h|^{1+s_i p_i}} \,\d h 
&= s_i(1-s_i)\int_{\vert x_i + h \vert < 1} \frac{\vert\phi(x + e_ih)\vert^{p_i}}{\vert h \vert^{1+s_ip_i}} \, \d h \\
&\le C \frac{\Vert \phi\Vert_{\infty}^{p_i}}{\vert x_i \vert^{1+s_ip_i}}.
\end{split}
\end{equation*}
Assertion \eqref{eq:derdecay} follows by combining the aforementioned estimates.
\end{proof}

Given $\phi \in C_c^{\infty}(\R^n)$, $\eps > 0$, and $\xi \in \R^n$, we introduce the scaled cutoff function $\phi_{\eps,\xi}$ defined by 
\begin{equation}\label{def:scaledcutoff}
\phi_{\eps,\xi}(x) = \phi(x^{(\eps,\xi)}),
\end{equation}
where $x^{(\eps,\xi)} := ((x_1-\xi_1) \eps^{-m_1},\dots,(x_n-\xi_n) \eps^{-m_n})$ and $m_i$ is given by \eqref{eq:m_i}. 

One easily computes:
\begin{equation*}
\vert D^{s_i}_{p_i}\phi_{\eps,0}(x)\vert^{p_i} = \eps^{-m_is_ip_i}\vert D^{s_i}_{p_i}\phi(x^{(\eps,0)})\vert^{p_i}.
\end{equation*}

Consequently, we have the following corollary of \Cref{lem:derdecay}:

\begin{corollary}
\label{cor:derdecay}
Let $\phi \in C_c^{\infty}(\R^n)$ be such that $\supp(\phi) \subset M_1(0)$. Let $\eps > 0$ and $\xi \in \R^n$. Then
\begin{equation*}
\vert D^{s_i}_{p_i}\phi_{\eps,\xi}(x)\vert^{p_i} \le C \min(\eps^{-m_is_ip_i},\eps^{m_i}\vert x_i-\xi_i \vert^{-1-s_ip_i}), \quad i = 1,\dots,n,
\end{equation*}
where $C = C(n,s_i,p_i,\Vert\phi\Vert_{\infty},\Vert \partial_i \phi\Vert_{\infty})>0$.
Moreover, $\supp(\vert D^{s_i}_{p_i}\phi_{\eps,\xi}\vert) \subset M_{\eps}^{(i)}(\xi)$.
\end{corollary}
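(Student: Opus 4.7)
The plan is to reduce \Cref{cor:derdecay} to \Cref{lem:derdecay} purely by translation and anisotropic scaling; no new analytic input is required. First I would note that, by definition \eqref{def:scaledcutoff}, $\phi_{\eps,\xi}(x) = \phi_{\eps,0}(x-\xi)$, and that the map $x\mapsto |D^{s_i}_{p_i}u(x)|^{p_i}$ is translation-equivariant because the defining integral is taken only over $h\in\R$. Consequently it suffices to establish both the pointwise bound and the support inclusion for $\phi_{\eps,0}$ and then translate by $\xi$ at the end.

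For the pointwise bound, the excerpt has already recorded the scaling identity $|D^{s_i}_{p_i}\phi_{\eps,0}(x)|^{p_i} = \eps^{-m_is_ip_i}|D^{s_i}_{p_i}\phi(x^{(\eps,0)})|^{p_i}$, which follows at once from the one-dimensional substitution $h\mapsto \eps^{m_i}h$ in the integral defining the left-hand side (the Jacobian contributes $\eps^{m_i}$ while the denominator contributes $\eps^{-m_i(1+s_ip_i)}$). Applying \Cref{lem:derdecay} at the point $x^{(\eps,0)}\in\R^n$, and using that its $i$-th coordinate is $x_i\eps^{-m_i}$, yields
\begin{equation*}
|D^{s_i}_{p_i}\phi_{\eps,0}(x)|^{p_i} \le C\,\eps^{-m_is_ip_i}\min\bigl(1,\,\eps^{m_i(1+s_ip_i)}|x_i|^{-1-s_ip_i}\bigr).
\end{equation*}
Distributing the prefactor and using the identity $-m_is_ip_i+m_i(1+s_ip_i)=m_i$ produces exactly $\min(\eps^{-m_is_ip_i},\eps^{m_i}|x_i|^{-1-s_ip_i})$, as claimed.

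The support inclusion follows in the same spirit: the support statement in \Cref{lem:derdecay} tells us that $|D^{s_i}_{p_i}\phi|$ vanishes at any $y\in\R^n$ having some coordinate $y_j$ with $j\neq i$ and $|y_j|\ge 1$. Taking $y = x^{(\eps,0)}$, which has $y_j = x_j\eps^{-m_j}$, rules out $|x_j|\ge\eps^{m_j}$ for all $j\neq i$, which is precisely the defining condition for the anisotropic slab $M_\eps^{(i)}(0)$. A final translation in $\xi$ concludes. There is no genuine analytic obstacle here; the only subtle point is the exponent bookkeeping --- verifying $-m_is_ip_i+m_i(1+s_ip_i)=m_i$ and making sure the coordinate-wise rescaling $y_j = x_j\eps^{-m_j}$ carries the unit rectangle of \Cref{lem:derdecay} to the correct anisotropic rectangle $M_\eps^{(i)}(\xi)$.
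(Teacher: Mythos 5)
Your proposal is correct and follows essentially the same route as the paper: the paper itself records the scaling identity $\vert D^{s_i}_{p_i}\phi_{\eps,0}(x)\vert^{p_i}=\eps^{-m_is_ip_i}\vert D^{s_i}_{p_i}\phi(x^{(\eps,0)})\vert^{p_i}$ (obtained exactly by the one-dimensional substitution $h\mapsto\eps^{m_i}h$) and then deduces the corollary directly from \Cref{lem:derdecay}, which is precisely the reduction by translation and anisotropic scaling, together with the exponent bookkeeping $-m_is_ip_i+m_i(1+s_ip_i)=m_i$, that you carry out.
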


The following lemma makes use of the compact embedding \Cref{thm:compemb} and the decay estimates. 
It follows in the spirit of \cite[Lemma 2.4]{BSS18} and can be interpreted as a compact embedding of $\Dsp$ into the weighted space $L^{p_i}(\vert D^{s_i}_{p_i}\phi_{\eps,\xi}\vert^{p_i} \,\d x)$ for every $i \in \{1,\dots,n\}$. Let us recall the function $v_k$ constructed in \Cref{sec:CCP} and define \[w_k := v_k - v.\]

\begin{lemma}
\label{lem:klimit}
Let $\phi \in C_c^{\infty}(\R^n)$ be such that $\supp(\phi) \subset M_1(0)$. Let $\xi \in \R^n$ and $\eps > 0$. 
Then for every $i \in \{1,\dots,n\}$,
\begin{equation*}
\int_{\R^n} \vert D_{p_i}^{s_i} (\phi_{\eps,\xi})w_k\vert^{p_i} \, \d x \to 0,
\end{equation*}
as $k \to \infty$.
\end{lemma}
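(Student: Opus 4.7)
The plan is to interpret the integrand as $|w_k|^{p_i}$ weighted by $|D^{s_i}_{p_i}\phi_{\eps,\xi}|^{p_i}$, and to exploit the two-sided pointwise bound from \Cref{cor:derdecay} together with the local compact embedding $\Dsp \subset\subset L^{p_i}_{\mathrm{loc}}(\R^n)$ coming from \Cref{thm:compemb} (applicable since $p_i \le p_{\max} < p^{\ast}$). I will also use that $w_k = v_k - v \rightharpoonup 0$ in $\Dsp$, so $w_k \to 0$ strongly in $L^{p_i}_{\mathrm{loc}}(\R^n)$, and that $\|w_k\|_{L^{p^{\ast}}(\R^n)}$ remains uniformly bounded by \Cref{thm:sobolev} and $\|v_k\|_{L^{p^{\ast}}(\R^n)}=1$.

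Since by \Cref{cor:derdecay} the weight is supported in the strip $M^{(i)}_\eps(\xi)$, which is bounded in every direction except the $i$-th, I would fix a threshold $R > 1$ and decompose
\begin{equation*}
\int_{\R^n} |D^{s_i}_{p_i}\phi_{\eps,\xi}(x)|^{p_i}|w_k(x)|^{p_i}\,\d x = I_{\mathrm{near}}(k,R) + I_{\mathrm{far}}(k,R),
\end{equation*}
where $I_{\mathrm{near}}(k,R)$ is integrated over $A_R := M^{(i)}_\eps(\xi) \cap \{|x_i - \xi_i| \le R\}$ and $I_{\mathrm{far}}(k,R)$ over its complement inside the strip. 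On the bounded rectangle $A_R$, the uniform estimate $|D^{s_i}_{p_i}\phi_{\eps,\xi}|^{p_i} \le C\eps^{-m_is_ip_i}$ yields $I_{\mathrm{near}}(k,R) \le C(\eps,R)\|w_k\|_{L^{p_i}(A_R)}^{p_i}$, which vanishes as $k \to \infty$ for each fixed $R$ by the local strong convergence.

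For the far part I would combine the tail bound $|D^{s_i}_{p_i}\phi_{\eps,\xi}|^{p_i} \le C\eps^{m_i}|x_i - \xi_i|^{-1-s_ip_i}$ with Hölder's inequality at the conjugate exponents $p^{\ast}/p_i$ and $p^{\ast}/(p^{\ast}-p_i)$ (legal because $p_i<p^{\ast}$):
\begin{equation*}
I_{\mathrm{far}}(k,R) \le \|w_k\|_{L^{p^{\ast}}(\R^n)}^{p_i}\left(\int_{\{|x_i-\xi_i|>R\} \cap M^{(i)}_\eps(\xi)} |D^{s_i}_{p_i}\phi_{\eps,\xi}|^{p_ip^{\ast}/(p^{\ast}-p_i)}\,\d x\right)^{(p^{\ast}-p_i)/p^{\ast}}.
\end{equation*}
Via Fubini, the weight integral factors into a finite product of $\eps$-powers (coming from the $j\ne i$ slices of measure $2\eps^{m_j}$) times $\int_R^\infty t^{-\alpha}\,\d t$ with $\alpha = (1+s_ip_i)p^{\ast}/(p^{\ast}-p_i) > 1$ (since $s_ip_i>0$). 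Consequently the tail tends to $0$ as $R\to\infty$, uniformly in $k$.

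A standard $\delta/2$ argument then closes the lemma: given $\delta > 0$, choose $R$ so large that $I_{\mathrm{far}}(k,R) < \delta/2$ for all $k$, and then let $k \to \infty$ so that $I_{\mathrm{near}}(k,R) < \delta/2$. The main obstacle — and the reason both estimates are really needed — is securing the correct order of limits: the uniform-in-$k$ far-field bound relies on the global $L^{p^{\ast}}$-control from \Cref{thm:sobolev}, while the near-field step crucially uses the strict inequality $p_i < p^{\ast}$ to invoke the compact Sobolev embedding. The precise decay rate $|x_i - \xi_i|^{-1-s_ip_i}$ from \Cref{cor:derdecay} is exactly what makes $\alpha > 1$, i.e., what renders the weight integrable in the tail at the dual Sobolev exponent.
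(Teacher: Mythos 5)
Your proof is correct and follows essentially the same route as the paper's: the same near/far decomposition over $\{|x_i-\xi_i|\lessgtr R\}$ inside the strip $M^{(i)}_\eps(\xi)$, the same H\"older estimate at exponents $p^{\ast}/p_i$ and $p^{\ast}/(p^{\ast}-p_i)$ with the \Cref{cor:derdecay} tail bound $|x_i-\xi_i|^{-1-s_ip_i}$ to make the far part vanish uniformly in $k$ as $R\to\infty$, and the compact embedding \Cref{thm:compemb} on the bounded near region to kill the other term as $k\to\infty$.
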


\begin{proof}
 For $R >0$ to be chosen later, we decompose
\begin{equation*}
\int_{\R^n} \vert D^{s_i}_{p_i}(\phi_{\eps,\xi})w_k \vert^{p_i} \, \d x = \int_{\vert x_i - \xi_i \vert < R} \vert D^{s_i}_{p_i}(\phi_{\eps,\xi})w_k \vert^{p_i} \, \d x + \int_{\vert x_i - \xi_i \vert > R} \vert D^{s_i}_{p_i}(\phi_{\eps,\xi})w_k \vert^{p_i} \, \d x =: I + II.
\end{equation*}
In order to bound $II$, we apply H\"older's inequality, \Cref{lem:scaling}, and \Cref{cor:derdecay}:
\begin{equation*}
\begin{split}
II &\le C \left(\Vert v_k \Vert_{L^{p^{\ast}}(\R^n)}^{p_i/p^{\ast}} + \Vert v \Vert_{L^{p^{\ast}}(\R^n)}^{p_i/p^{\ast}} \right)\left( \int_{\vert x_i -\xi_i\vert > R} \vert D^{s_i}_{p_i}\phi_{\eps,\xi}\vert^{p_i\left(\frac{p^{\ast}}{p^{\ast} - p_i}\right)}\, \d x\right)^{\frac{p^{\ast}-p_i}{p^{\ast}}}\\
&\le C \left( \int_{M_{\eps}^{(i)}(\xi) \cap \{\vert x_i - \xi_i \vert > R\}} \vert x_i- \xi_i\vert^{(-1-s_ip_i)\left(\frac{p^{\ast}}{p^{\ast} - p_i}\right)} \, \d x\right)^{\frac{p^{\ast}-p_i}{p^{\ast}}}\\
&\le C\left(\int_{\vert x_i \vert > R} \vert x_i\vert^{(-1-s_ip_i)\left(\frac{p^{\ast}}{p^{\ast} - p_i}\right)} \, \d x_i\right)^{\frac{p^{\ast}-p_i}{p^{\ast}}}\\
&\le C R^{\gamma},
\end{split}
\end{equation*}
where $C >0$ is independent of $k$, and $\gamma = \left((-1-s_ip_i)\left(\frac{p^{\ast}}{p^{\ast} - p_i}\right)+1\right)\frac{p^{\ast}-p_i}{p^{\ast}} = -p_i(s_i+1/p^{\ast}) < 0$. Thus, $II \to 0$ as $R \to \infty$ uniformly in $k$.\\
For $I$, we observe that the integration takes place on the compact set 
\begin{equation*}
\Omega := M_{\eps}^{(i)}(\xi) \cap \{\vert x_i - \xi_i\vert < R \} \subset \R^n
\end{equation*}
due to \Cref{cor:derdecay}. From \Cref{thm:compemb}, we know that $\Vert w_k \Vert_{L^{p_i}(\Omega)} \to 0$ as $k \to \infty$. Hence, it follows that $I \to 0$ as $k \to \infty$ for any $R > 0$ upon recalling that $\vert D^{s_i}_{p_i}\phi_{\eps,\xi}\vert$ is bounded due to \Cref{cor:derdecay}.
\end{proof}

We end this section by another convergence result for sequences of scaled cutoff functions $(\phi_{\eps,\xi})_{\eps}$ given $\xi \in \R^n$. Let $v$ be the function given in \Cref{thm:unit_norm}.

\begin{lemma}
\label{lem:epslimit}
Let $\phi \in C_c^{\infty}(\R^n)$ be such that $\supp(\phi) \subset M_1(0)$. Let $\xi \in \R^n$ and $\eps > 0$. Then for every $i \in \{1,\dots,n\}$,
\begin{equation*}
\int_{\R^n} \vert D_{p_i}^{s_i} (\phi_{\eps,\xi})v\vert^{p_i} \, \d x \to 0,
\end{equation*}
as $\eps \to 0$.
\end{lemma}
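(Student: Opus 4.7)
The plan is to split the integral at the natural scale $\eps^{m_i}R$ in the $x_i$-direction (with $R>1$ a parameter to be fixed later), since by \Cref{cor:derdecay} this is precisely the threshold between the two regimes of the pointwise bound for $|D^{s_i}_{p_i}\phi_{\eps,\xi}|^{p_i}$. Concretely, set
\begin{equation*}
I_{\eps,R} := \int_{\{|x_i-\xi_i|<\eps^{m_i}R\}} |D^{s_i}_{p_i}\phi_{\eps,\xi}|^{p_i}|v|^{p_i}\,\d x, \qquad II_{\eps,R} := \int_{\{|x_i-\xi_i|\ge\eps^{m_i}R\}} |D^{s_i}_{p_i}\phi_{\eps,\xi}|^{p_i}|v|^{p_i}\,\d x.
\end{equation*}
I will show that, for each fixed $R$, $I_{\eps,R}\to 0$ as $\eps\to 0$, while $II_{\eps,R}\to 0$ as $R\to\infty$ uniformly in $\eps$. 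Combining the two via a standard diagonal argument (first fix $R$ large, then $\eps$ small) gives the claim.

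For $I_{\eps,R}$ the effective support is $E_{\eps,R}:=M_{\eps}^{(i)}(\xi)\cap\{|x_i-\xi_i|<\eps^{m_i}R\}$, whose Lebesgue measure equals $2^{n}R\,\eps^{\sum_{j}m_{j}}$ and therefore vanishes as $\eps\to 0$. Using the uniform bound $|D^{s_i}_{p_i}\phi_{\eps,\xi}|^{p_i}\le C\eps^{-m_is_ip_i}$ from \Cref{cor:derdecay} together with H\"older's inequality with conjugate exponents $p^{\ast}/p_i$ and $p^{\ast}/(p^{\ast}-p_i)$ gives
\begin{equation*}
I_{\eps,R} \le C\eps^{-m_is_ip_i}\, |E_{\eps,R}|^{(p^{\ast}-p_i)/p^{\ast}} \left(\int_{E_{\eps,R}}|v|^{p^{\ast}}\,\d x\right)^{p_i/p^{\ast}}.
\end{equation*}
The identity $p^{\ast}s_im_i=\tfrac{p^{\ast}-p_i}{p_i}\sum_{j}m_{j}$ established in the proof of \Cref{thm:sobolev}, equivalently $s_ip_im_i=\tfrac{p^{\ast}-p_i}{p^{\ast}}\sum_{j}m_{j}$, forces the $\eps$-prefactors to cancel exactly, so that $I_{\eps,R}\le CR^{(p^{\ast}-p_i)/p^{\ast}}(\int_{E_{\eps,R}}|v|^{p^{\ast}}\,\d x)^{p_i/p^{\ast}}$. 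Since $v\in L^{p^{\ast}}(\R^n)$ and $|E_{\eps,R}|\to 0$, absolute continuity of the Lebesgue integral yields $I_{\eps,R}\to 0$.

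For $II_{\eps,R}$ the decay bound $|D^{s_i}_{p_i}\phi_{\eps,\xi}|^{p_i}\le C\eps^{m_i}|x_i-\xi_i|^{-1-s_ip_i}$ holds throughout the region of integration. Proceeding as in the proof of \Cref{lem:klimit}, an application of H\"older with the same conjugate exponents bounds $II_{\eps,R}$ by $\|v\|_{L^{p^{\ast}}(\R^n)}^{p_i}$ times the $1/\alpha$-th power of
\begin{equation*}
\int_{M_{\eps}^{(i)}(\xi)\cap\{|x_i-\xi_i|\ge\eps^{m_i}R\}} |D^{s_i}_{p_i}\phi_{\eps,\xi}|^{p_i\cdot p^{\ast}/(p^{\ast}-p_i)}\,\d x,
\end{equation*}
where $\alpha=p^{\ast}/(p^{\ast}-p_i)$. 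This factors into the transverse volume $\prod_{j\ne i}2\eps^{m_j}$ times the one-dimensional tail integral $\int_{\eps^{m_i}R}^{\infty}t^{-(1+s_ip_i)\alpha}\,\d t$, and the scaling identity for $m_i$ again produces total cancellation in $\eps$, leaving $II_{\eps,R}\le CR^{-\beta}\|v\|_{L^{p^{\ast}}(\R^n)}^{p_i}$ with $\beta=p_i(s_i+1/p^{\ast})>0$. The main subtlety is the choice of splitting scale: a naive split at a fixed $R$ independent of $\eps$ (as used in \Cref{lem:klimit}) leaves a divergent $\eps^{-m_is_ip_i}$ prefactor in $I_{\eps,R}$ that cannot be absorbed without extra regularity of $v$; splitting at the matched scale $\eps^{m_i}R$ aligns the two decay regimes of $|D^{s_i}_{p_i}\phi_{\eps,\xi}|^{p_i}$ with the scaling relation governing $m_i$, so that both pieces are simultaneously controllable.
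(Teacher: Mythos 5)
Your proof is correct and follows essentially the same strategy as the paper's: split along the $x_i$-direction at the $\eps$-dependent scale $\sim\eps^{m_i}$, apply the two regimes of the pointwise bound from \Cref{cor:derdecay}, use H\"older's inequality with exponents $p^\ast/p_i$ and $p^\ast/(p^\ast-p_i)$, and exploit the scaling identity $m_i s_i p_i = \frac{p^\ast - p_i}{p^\ast}\sum_j m_j$ so that the $\eps$-prefactors cancel and the remaining expressions tend to zero by $v\in L^{p^\ast}(\R^n)$. The only difference is cosmetic: for the far-field piece you introduce the free threshold $\eps^{m_i}R$ and compute the one-dimensional tail directly to get a factor $R^{-\beta}$ with $\beta = p_i(s_i+1/p^\ast)$, then pass $R\to\infty$ after $\eps\to 0$, whereas the paper fixes the threshold at $\eps^{m_i}$ and decomposes the far field into dyadic shells $\{(2^l\eps)^{m_i}<|x_i-\xi_i|<(2^{l+1}\eps)^{m_i}\}$ with a truncation in the dyadic index --- these are two equivalent presentations of the same geometric-decay argument.
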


\begin{proof}
For $\eps > 0$ we estimate, using \Cref{cor:derdecay},
\begin{equation*}
\begin{split}
\int_{\R^n} \vert D_{p_i}^{s_i} (\phi_{\eps,\xi})v\vert^{p_i} \, \d x &\le \eps^{-m_is_ip_i} \int_{M_{\eps}(\xi)}  \vert v(x) \vert^{p_i} \, \d x + \eps^{m_i}\int_{M_{\eps}^{(i)}(\xi) \cap \{\vert x_i - \xi_i \vert > \eps^{m_i}\}} \frac{\vert v (x) \vert^{p_i}}{\vert x_i - \xi_i \vert^{1+s_ip_i}} \, \d x\\
&=: I + II.
\end{split}
\end{equation*}
The first summand can be bounded from above by H\"older's inequality:
\begin{equation*}
I \le \eps^{-m_is_ip_i}\left(\int_{M_{\eps}(\xi)} \vert v(x) \vert^{p^{\ast}} \, \d x\right)^{p_i/p^{\ast}} \vert M_{\eps}(\xi)\vert^{\frac{p^{\ast}-p_i}{p^{\ast}}} \le C \Vert v \Vert_{L^{p^{\ast}}(M_{\eps}(\xi))}^{p_i}
\end{equation*}
for some $C>0$, where we used that $m_is_ip_i = \left(1-\frac{p_i}{p^{\ast}}\right)\sum_{j =1}^n m_j$. Since $v \in L^{p^{\ast}}(\R^n)$ it is clear that $I \to 0$ as $\eps \searrow 0$.\\
For $II$, we decompose the domain of integration as follows:
\begin{equation*}
\begin{split}
II &= \sum_{l = 0}^{\infty} \eps^{m_i}\int_{M^{(i)}_{\eps}(\xi) \cap \{(2^{l+1}\eps)^{m_i} >\vert x_i - \xi_i \vert > (2^l\eps)^{m_i}\}} \frac{\vert v (x) \vert^{p_i}}{\vert x_i - \xi_i \vert^{1+s_ip_i}} \, \d x\\
&\le \sum_{l = 0}^{\infty} \eps^{-m_is_ip_i} 2^{-l m_i(1+s_ip_i)} \int_{M_{2^{l+1}\eps}(\xi)} \vert v(x) \vert^{p_i} \, \d x.
\end{split}
\end{equation*}
By H\"older's inequality, we deduce that
\begin{equation*}
II \le c \sum_{l = 0}^{\infty} 2^{-l m_i} \left(\int_{M_{2^{l+1}\eps}(\xi)} \vert v(x) \vert^{p^{\ast}} \, \d x \right)^{p_i /p^{\ast}}.
\end{equation*}
Note that for every $\delta > 0$, we can find $l_0 \in \N$ such that $\sum_{l = l_0+1}^{\infty} 2^{-l m_i} < \delta$. Therefore
\begin{equation}
\label{eq:epslimitHelp1}
II \le \delta \Vert v \Vert_{L^{p^{\ast}}(\R^n)}^{p_i} + C\Vert v \Vert_{L^{p^{\ast}}\left(M_{2^{l_0+1}\eps}(\xi)\right)}^{p_i},
\end{equation}
where $C = C(l_0) > 0$. Since \eqref{eq:epslimitHelp1} holds true for every $\delta > 0$, also $II \to 0$ as $\eps \searrow 0$. This implies the desired result. 
\end{proof}

\subsection{Reverse H\"older inequalities}\label{ssec:reverse}

In this subsection, we prove reverse H\"older inequalities for the ``components'' of limiting measures of the minimizing sequence that spread or concentrate. 
Recall the sequence $(v_k)$ constructed in \Cref{sec:CCP}. We may take a further subsequence of $(v_k)$ so that
\begin{align*}
|D^{s_i}_{p_i} (v_k-v)|^{p_i} \,\mathrm{d}x &\overset{v}{\rightharpoonup} \mu_i,\\
|v_k-v|^{p^{\ast}} \,\mathrm{d}x &\overset{v}{\rightharpoonup} \nu,\\
\sum_{i =1}^n \frac{1}{p_i}\vert D_{p_i}^{s_i} v_k\vert^{p_i}\, \d x &\overset{v}{\rightharpoonup} \widetilde{\mu}
\end{align*}
for some positive bounded Borel measures $\mu_i$, $\nu$, and $\widetilde{\mu}$,  where $\overset{v}{\rightharpoonup}$ denotes vague convergence. 
Moreover, we define $\mu = \sum_{i=1}^n \frac{1}{p_i} \mu_i$. These quantities contain information on the concentration of $(v_k)$.
We also define the limits
\begin{equation*}
\begin{split}
&\mu_{\infty} = \lim_{R \to \infty} \limsup_{k\to\infty} \sum_{i=1}^n \frac{1}{p_i} \int_{\R^n \setminus M_R(0)} |D^{s_i}_{p_i} v_k|^{p_i} \,\mathrm{d}x, \\
&\nu_{\infty} = \lim_{R \to \infty} \limsup_{k\to\infty} \int_{\R^n \setminus M_R(0)} |v_k|^{p^{\ast}} \,\mathrm{d}x,
\end{split}
\end{equation*}
which encode information on the spreading of $(v_k)$. 

In the following, we will make use of the estimate
\begin{equation}\label{eq:minkw}
 \Vert D^{s_i}_{p_i}(f g)\Vert_{L^{p_i}(\R^n)} \le \Vert D^{s_i}_{p_i}(f)g\Vert_{L^{p_i}(\R^n)} + \Vert D^{s_i}_{p_i}(g)f \Vert_{L^{p_i}(\R^n)},
\end{equation}
which holds true for all functions $f,g:\R^n\to\R$, whenever the aforementioned quantities are finite.
This inequality is an immediate consequence of Minkowski's inequality.

Our first result provides reverse H\"older inequalities for $\mu_{\infty}$ and $\nu_{\infty}$.
\begin{lemma}\label{lem:inftyrevHolder} 
For $\mu_{\infty}$ and $\nu_{\infty}$ as above, the following is true:
\[S\nu_{\infty}^{p_{\max}/p^{\ast}} \leq \mu_{\infty}.\]
\end{lemma}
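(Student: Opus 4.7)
The plan is to apply the Sobolev-type inequality \Cref{lem:SobCor} to a truncation $v_k\eta_R$ that isolates the mass escaping to infinity, and then pass to the limit first in $k$ and afterwards in $R$. Fix $\phi \in C_c^{\infty}(\R^n)$ with $0 \le \phi \le 1$, $\phi \equiv 1$ on $M_{1/2}(0)$, and $\supp \phi \subset M_1(0)$. For $R > 0$, set $\psi_R := \phi_{2R,0}$ as in \eqref{def:scaledcutoff} and $\eta_R := 1 - \psi_R$, so that $\eta_R \equiv 0$ on $M_R(0)$ and $\eta_R \equiv 1$ on $\R^n \setminus M_{2R}(0)$. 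Since $|v_k\eta_R| \le |v_k|$, \Cref{lem:SobCor} gives
\begin{equation*}
S\|v_k\eta_R\|_{L^{p^\ast}(\R^n)}^{p_{\max}} \le \sum_{i=1}^n \frac{1}{p_i}\|D^{s_i}_{p_i}(v_k\eta_R)\|_{L^{p_i}(\R^n)}^{p_i}.
\end{equation*}

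For the left-hand side, the sandwich $\int_{\R^n\setminus M_{2R}(0)}|v_k|^{p^\ast}\,\d x \le \|v_k\eta_R\|_{L^{p^\ast}(\R^n)}^{p^\ast} \le \int_{\R^n\setminus M_R(0)}|v_k|^{p^\ast}\,\d x$ together with the definition of $\nu_\infty$ gives $\lim_{R\to\infty}\limsup_{k\to\infty}\|v_k\eta_R\|_{L^{p^\ast}(\R^n)}^{p_{\max}} = \nu_\infty^{p_{\max}/p^\ast}$. For the right-hand side, I would combine \eqref{eq:minkw} with the elementary inequality $(a+b)^{p_i} \le (1+\delta)a^{p_i} + C_\delta b^{p_i}$, valid for each $\delta > 0$, to split each summand into a main part $(1+\delta)\|(D^{s_i}_{p_i}v_k)\eta_R\|_{L^{p_i}}^{p_i}$ and an error part $C_\delta\|(D^{s_i}_{p_i}\eta_R)v_k\|_{L^{p_i}}^{p_i}$. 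The main part is bounded by $\int_{\R^n\setminus M_R(0)}|D^{s_i}_{p_i}v_k|^{p_i}\,\d x$ (since $|\eta_R|$ is bounded by $1$ and vanishes on $M_R(0)$), and after summing $\sum_i 1/p_i$ and passing to the double limit it contributes exactly $(1+\delta)\mu_\infty$.

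The hard part is showing the error part vanishes in the limit. Using $|D^{s_i}_{p_i}\eta_R| = |D^{s_i}_{p_i}\psi_R|$ and decomposing $v_k = v + w_k$, the problem reduces to proving $\int_{\R^n}|D^{s_i}_{p_i}\psi_R|^{p_i}|w_k|^{p_i}\,\d x \to 0$ as $k \to \infty$ for each fixed $R$ (which follows from \Cref{lem:klimit} applied with $\eps = 2R$ and $\xi = 0$) and $\int_{\R^n}|D^{s_i}_{p_i}\psi_R|^{p_i}|v|^{p_i}\,\d x \to 0$ as $R \to \infty$. For the latter, given $\varepsilon > 0$ I would pick $R_0$ with $\int_{\R^n\setminus M_{R_0}(0)}|v|^{p^\ast}\,\d x < \varepsilon$ and split the domain of integration into $M_{R_0}(0)$ and its complement. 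On $M_{R_0}(0)$, the identity $\psi_R \equiv 1$ on $M_R(0) \supset M_{R_0}(0)$ restricts the $h$-integration in the definition of $|D^{s_i}_{p_i}\psi_R(x)|^{p_i}$ to $|h| > R^{m_i}/2$ whenever $R \ge 2R_0$, yielding the pointwise bound $|D^{s_i}_{p_i}\psi_R(x)|^{p_i} \le C R^{-m_is_ip_i}$, which tends to $0$. On the complement, H\"older's inequality with exponents $p^\ast/p_i$ and $p^\ast/(p^\ast - p_i)$ together with a direct computation based on \Cref{cor:derdecay} (resting on the same cancellation $m_is_ip_i = (1 - p_i/p^\ast)\sum_j m_j$ used in the proof of \Cref{lem:epslimit}) shows that $\|D^{s_i}_{p_i}\psi_R\|_{L^{p_ip^\ast/(p^\ast-p_i)}(\R^n)}^{p_i}$ is bounded uniformly in $R$, giving a contribution of order $\varepsilon^{p_i/p^\ast}$. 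Combining everything one arrives at $S\nu_\infty^{p_{\max}/p^\ast} \le (1+\delta)\mu_\infty$ for every $\delta > 0$, and sending $\delta \to 0$ concludes. The main obstacle is precisely this vanishing of the $v$-cross-term: unlike in \Cref{lem:epslimit}, where $M_{\eps}(\xi)$ shrinks to a point as $\eps \to 0$, here the sets $M_{2R}(0)$ exhaust $\R^n$ as $R \to \infty$, so the result cannot be obtained from shrinking supports and the mass-tightness splitting of $v \in L^{p^\ast}(\R^n)$ is essential.
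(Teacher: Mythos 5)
Your proof is correct, but it takes a genuinely different route from the paper's. The paper works throughout with $w_k = v_k - v$: it first establishes the identities \eqref{eq:nuChar}--\eqref{eq:muChar}, rewriting $\nu_\infty$ and $\mu_\infty$ in terms of $w_k$ (this is where the tightness of $v \in L^{p^*}(\R^n)$ enters), then applies \Cref{lem:SobCor} to $\psi_R w_k$ (invoking Brezis--Lieb to verify the hypothesis $\|w_k\|_{L^{p^*}} \le 1$ for large $k$). The payoff of this normalization is that the Minkowski cross-term $\|D^{s_i}_{p_i}(\psi_R)w_k\|_{L^{p_i}}$ already vanishes as $k \to \infty$ for \emph{fixed} $R$ via \Cref{lem:klimit}, so no $(1+\delta)$-trick is needed. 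You instead apply \Cref{lem:SobCor} directly to $v_k\eta_R$, which makes the hypothesis check trivial (since $\|v_k\|_{L^{p^*}} = 1$) and makes $\nu_\infty, \mu_\infty$ emerge directly from their definitions by a sandwich, but at the cost that the cross-term retains a persistent $v$-part that does not vanish for fixed $R$; you then need both the $(1+\delta)a^{p_i} + C_\delta b^{p_i}$ inequality and the new estimate $\int |D^{s_i}_{p_i}\eta_R|^{p_i}|v|^{p_i}\,\d x \to 0$ as $R \to \infty$, which you correctly prove by a mass-tightness splitting combined with the uniform-in-$R$ bound on $\|D^{s_i}_{p_i}\psi_R\|_{L^{p_ip^*/(p^*-p_i)}}$ (resting on the cancellation $m_is_ip_i = (1-p_i/p^*)\sum_j m_j$). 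Both proofs are valid; yours trades the Brezis--Lieb reformulation of $\nu_\infty, \mu_\infty$ for a more hands-on treatment of the cross-term, and is somewhat more self-contained. One small remark: the paper's claim that $\lim_k\|w_k\|^{p^*}_{L^{p^*}} < 1$ would need $v \ne 0$ (though $\le 1$ suffices for \Cref{lem:SobCor}); your formulation avoids this subtlety entirely.
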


\begin{proof}
Let $\phi \in C_c^{\infty}(\R^n)$ be such that $0 \le \phi \le 1$, $\phi \equiv 1$ in $M_1(0)$ and $\phi \equiv 0$ in $\R^n \setminus M_2(0)$. For $R > 0$, we define $\psi_R := 1 - \phi_{R,0}$, where $\phi_{R,0}$ is defined as in \eqref{def:scaledcutoff}. Then, $\psi_R \equiv 1$ in $\R^n \setminus M_{2R}(0)$ and $\psi_R \equiv 0$ in $M_{R}(0)$. We first prove that
\begin{align}
\label{eq:nuChar}
\nu_{\infty} &= \lim_{R \to \infty} \limsup_{k \to \infty} \int_{\mathbb{R}^n} |w_k|^{p^{\ast}} \psi_R^{p^{\ast}} \,\mathrm{d}x \quad\text{and}\\
\label{eq:muChar}
\mu_{\infty} &= \lim_{R \to \infty} \limsup_{k \to \infty} \sum_{i=1}^n \frac{1}{p_i} \int_{\mathbb{R}^n} |D^{s_i}_{p_i}w_k|^{p_i} \psi_R^{p_i} \,\mathrm{d}x.
\end{align}
For \eqref{eq:nuChar} we observe that 
\begin{equation*}
\int_{\R^n \setminus M_{2R}(0)} \vert v_k \vert^{p^{\ast}} \, \d x \le \int_{\R^n} \vert v_k \vert^{p^{\ast}} \psi_R^{p^{\ast}} \, \d x \le \int_{\R^n \setminus M_{R}(0)} \vert v_k \vert^{p^{\ast}}\, \d x,
\end{equation*}
which implies that
\begin{equation}
\label{eq:nuinftyrep1}
\nu_{\infty} = \lim_{R \to \infty} \limsup_{k \to \infty} \int_{\mathbb{R}^n} |v_k|^{p^{\ast}} \psi_R^{p^{\ast}} \,\mathrm{d}x.
\end{equation}
Thus, the identity \eqref{eq:nuChar} follows now upon the observation that
\begin{equation*}
\lim_{R \to \infty} \int_{\R^n} \vert v \vert^{p^{\ast}} \psi_R^{p^{\ast}} \, \d x = \lim_{R \to \infty} \int_{\R^n \setminus M_R(0)} \vert v \vert^{p^{\ast}} \psi_R^{p^{\ast}} \, \d x = 0,
\end{equation*}
since $v \in L^{p^{\ast}}(\R^n)$. The proof of \eqref{eq:muChar} is analogous.

Let us next prove that $S\nu_{\infty}^{p_{\max}/p^{\ast}} \leq \mu_{\infty}$.
By Brezis–Lieb’s lemma and \eqref{eq:conv_ptwise} we have \[\lim_{k\to\infty}\|w_k\|_{L^{p^{\ast}}(\R^n)}^{p^{\ast}} = \lim_{k\to\infty}\|v_k\|_{L^{p^{\ast}}(\R^n)}^{p^{\ast}}-\|v\|_{L^{p^{\ast}}(\R^n)}^{p^{\ast}}<1.\] It follows from \Cref{lem:SobCor} that for large $k$
\begin{equation*}
S\|\psi_R w_k\|_{L^{p^{\ast}}(\R^n)}^{p_{\max}} \leq \sum_{i=1}^n \frac{1}{p_i} \int_{\mathbb{R}^n} |D^{s_i}_{p_i}(\psi_R w_k)|^{p_i} \,\mathrm{d}x.
\end{equation*}

By \Cref{lem:klimit}, we have
\begin{equation*}
\lim_{k \to \infty} \int_{\mathbb{R}^n} |w_k|^{p_i} |D^{s_i}_{p_i} \psi_R|^{p_i} \, \mathrm{d}x = \lim_{k \to \infty} \int_{\mathbb{R}^n} |w_k|^{p_i} |D^{s_i}_{p_i} \phi_{R,0}|^{p_i} \, \mathrm{d}x = 0.
\end{equation*}

Therefore, by \eqref{eq:minkw},
\begin{equation*}
S \left( \lim_{R \to \infty} \limsup_{k \to \infty} \|\psi_R w_k\|_{p^{\ast}}^{p_{\max}} \right) \leq  \lim_{R \to \infty} \limsup_{k \to \infty} \sum_{i=1}^n \frac{1}{p_i} \int_{\mathbb{R}^n} |D^{s_i}_{p_i}w_k|^{p_i} \psi_R^{p_i} \,\mathrm{d}x,
\end{equation*}
and the desired result follows from \eqref{eq:nuChar} and \eqref{eq:muChar}.
\end{proof}

The following lemma provides the reverse H\"older inequalities for $\mu_i$ and $\nu$.

\begin{lemma}
\label{lem:revHolder}
Let $\phi \in C_c^{\infty}(\R^n)$ be such that $\supp(\phi) \subset M_1(0)$. Then, for every $\xi \in \R^n$ and $\eps > 0$,
\begin{align}
\label{eq:revHolder1}
\left( \int_{\R^n} \vert \phi_{\eps,\xi} \vert^{p^{\ast}} \, \d \nu\right)^{1/p^{\ast}} &\le C \sum_{i=1}^n \left(\int_{\R^n} \vert\phi_{\eps,\xi}\vert^{p_i} \, \d \mu_i \right)^{1/p_i} \quad\text{and}\\
\label{eq:revHolder2}
\left( \int_{\R^n} \vert \phi_{\eps,\xi} \vert^{p^{\ast}}  \, \d \nu\right)^{1/p^{\ast}} &\le C \left(\int_{\R^n} \vert\phi_{\eps,\xi}\vert^{p_{\max}}  \, \d \mu \right)^{1/p_{\max}}
\end{align} 
for some constant $C = C(\Vert \mu\Vert,\Vert \phi\Vert_{\infty}) >0$.
\end{lemma}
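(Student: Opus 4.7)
For both \eqref{eq:revHolder1} and \eqref{eq:revHolder2} the natural test function is $\phi_{\eps,\xi} w_k$, to which the Sobolev inequality \Cref{thm:sobolev} is applied. After splitting each $D^{s_i}_{p_i}(\phi_{\eps,\xi} w_k)$ via the Minkowski-type inequality \eqref{eq:minkw}, the error term in which the fractional derivative falls on the cutoff is handled by \Cref{lem:klimit}, while the remaining terms are passed to the limit using vague convergence of $\vert w_k\vert^{p^{\ast}}\,\d x \overset{v}{\rightharpoonup} \nu$ and $\vert D^{s_i}_{p_i}w_k\vert^{p_i}\,\d x \overset{v}{\rightharpoonup} \mu_i$ against the continuous and compactly supported test functions $\phi_{\eps,\xi}^{p^{\ast}}$ and $\phi_{\eps,\xi}^{p_i}$.

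\textbf{Carrying out \eqref{eq:revHolder1}.} I would apply \Cref{thm:sobolev} to $\phi_{\eps,\xi}w_k \in \Dsp$ and then use \eqref{eq:minkw} to obtain
\begin{equation*}
\Vert \phi_{\eps,\xi}w_k\Vert_{L^{p^{\ast}}(\R^n)} \le C\sum_{i=1}^n \Vert\phi_{\eps,\xi} D^{s_i}_{p_i}w_k\Vert_{L^{p_i}(\R^n)} + C\sum_{i=1}^n \Vert w_k D^{s_i}_{p_i}\phi_{\eps,\xi}\Vert_{L^{p_i}(\R^n)}.
\end{equation*}
The second sum vanishes as $k\to\infty$ by \Cref{lem:klimit}. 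For the first sum and the left-hand side, vague convergence against the continuous compactly supported test functions $\phi_{\eps,\xi}^{p_i}$ and $\phi_{\eps,\xi}^{p^{\ast}}$ gives $\Vert\phi_{\eps,\xi}w_k\Vert_{L^{p^{\ast}}}^{p^{\ast}} \to \int \phi_{\eps,\xi}^{p^{\ast}}\,\d\nu$ and $\Vert\phi_{\eps,\xi}D^{s_i}_{p_i}w_k\Vert_{L^{p_i}}^{p_i} \to \int\phi_{\eps,\xi}^{p_i}\,\d\mu_i$, and \eqref{eq:revHolder1} follows by taking $k\to\infty$ and raising each side to the appropriate power.

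\textbf{Deducing \eqref{eq:revHolder2}.} From the definition $\mu = \sum_{j=1}^n \frac{1}{p_j}\mu_j$ one reads off the pointwise majorization $\mu_i \le p_{\max}\mu$ as Borel measures (since $\mu \ge \mu_i/p_i \ge \mu_i/p_{\max}$). Using this together with H\"older's inequality on $\supp\phi_{\eps,\xi}$ with exponents $p_{\max}/p_i$ and $p_{\max}/(p_{\max}-p_i)$ yields, for each $i$,
\begin{equation*}
\left(\int_{\R^n} \phi_{\eps,\xi}^{p_i}\,\d\mu_i\right)^{1/p_i} \le p_{\max}^{1/p_i}\Vert\mu\Vert^{(p_{\max}-p_i)/(p_i p_{\max})}\left(\int_{\R^n}\phi_{\eps,\xi}^{p_{\max}}\,\d\mu\right)^{1/p_{\max}}.
\end{equation*}
Summing in $i$ and feeding the result into \eqref{eq:revHolder1} produces \eqref{eq:revHolder2} with a constant depending only on $\Vert\mu\Vert$ (and trivially on $\Vert\phi\Vert_\infty$ through the fixed choice of $\phi$).

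\textbf{Main obstacle.} The only genuinely nontrivial step is the vanishing of the cutoff-derivative error, i.e.\ the application of \Cref{lem:klimit}. This is where the compact embedding \Cref{thm:compemb} and the anisotropic decay estimates of \Cref{cor:derdecay} enter: they allow one to upgrade the local $L^{p_i}$-convergence $w_k \to 0$ on compact sets to vanishing of the full weighted integral $\int |D^{s_i}_{p_i}\phi_{\eps,\xi}|^{p_i}|w_k|^{p_i}\,\d x$. Everything else then reduces to vague convergence against continuous compactly supported test functions together with elementary H\"older manipulations.
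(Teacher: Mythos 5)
Your argument is correct and coincides with the paper's own proof: you apply \Cref{thm:sobolev} to $\phi_{\eps,\xi}w_k$, split with \eqref{eq:minkw}, dispose of the cutoff-derivative error term via \Cref{lem:klimit}, pass to the limit by vague convergence to obtain \eqref{eq:revHolder1}, and then deduce \eqref{eq:revHolder2} from the domination $\mu_i \le p_{\max}\mu$ together with H\"older on the finite measure $\mu$. The only cosmetic difference is the constant in \eqref{eq:revHolder2}: the paper records $C(\Vert\mu\Vert,\Vert\phi\Vert_\infty)$, whereas your H\"older step yields a bound depending only on $\Vert\mu\Vert$, which is immaterial here.
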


\begin{proof}
For simplicity, we write $\phi = \phi_{\eps,\xi}$. First, we prove \eqref{eq:revHolder1}. By definition of $\nu$ we deduce that $\int_{\R^n} \vert \phi w_k \vert^{p^{\ast}}  \, \d x \to \int_{\R^n} \vert \phi \vert^{p^{\ast}}  \, \d \nu$ as $k \to \infty$. By \Cref{thm:sobolev} it is sufficient to prove that for $i = 1,\dots,n$:
\begin{equation}
\label{eq:revHolderHelp1}
\limsup\limits_{k \to \infty} \Vert D^{s_i}_{p_i}(\phi w_k)\Vert_{L^{p_i}(\R^n)} \le C\left(\int_{\R^n} \vert\phi\vert^{p_i}  \, \d \mu_i \right)^{1/p_i}.
\end{equation}
By definition of $\mu_i$ it follows that $\int_{\R^n} \vert D^{s_i}_{p_i}(w_k) \phi \vert^{p_i}  \, \d x \to \int_{\R^n} \vert\phi\vert^{p_i}  \,\d \mu_i$, as $k \to \infty$. Therefore, by \eqref{eq:minkw}, it remains to show that
\begin{equation}
\label{eq:revHolderHelp2}
\int_{\R^n} \vert D^{s_i}_{p_i}(\phi)w_k \vert^{p_i}  \, \d x \to 0, ~~ \text{ as } k \to \infty,
\end{equation}
in order to deduce \eqref{eq:revHolderHelp1}. Since \eqref{eq:revHolderHelp2} follows from \Cref{lem:klimit}, estimate \eqref{eq:revHolder1} is proved.\\
For \eqref{eq:revHolder2} we observe that for every $i \in \{1,\dots,n\}$ by H\" older's inequality:
\begin{equation*}
\int_{\R^n} \vert\phi\vert^{p_i}  \, \d \mu_i \le C \int_{\R^n} \vert\phi\vert^{p_i}  \, \d \mu \le C \left(\Vert \mu \Vert\Vert\phi\Vert_{\infty}\right)^{1-\frac{p_i}{p_{\max}}} \left( \int_{\R^n} \vert \phi \vert^{p_{\max}}  \, \d \mu \right)^{p_i/p_{\max}}.
\end{equation*}
\end{proof}

\begin{corollary}
\label{cor:CC}
There exists an at most countable set $\Xi \subset \R^n$ and positive weights $(\nu_{\xi})_{\xi \in \Xi}$, $(\mu_{\xi})_{\xi \in \Xi}$ such that
\begin{equation*}
\nu = \sum_{\xi \in \Xi} \nu_{\xi}\delta_{\{\xi\}} \quad\text{and}\quad \mu \ge \sum_{\xi \in \Xi} \mu_{\xi}\delta_{\{\xi\}}.
\end{equation*}
\end{corollary}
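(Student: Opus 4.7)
The plan is to first localize the reverse Hölder estimate from \Cref{lem:revHolder} at an arbitrary point $\xi \in \R^n$, and then to combine it with a covering argument based on the anisotropic doubling structure of $(\R^n, d_M)$ to kill the non-atomic part of $\nu$.

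For the first step, I will fix $\phi \in C_c^\infty(\R^n)$ with $0 \le \phi \le 1$, $\phi \equiv 1$ on $M_{1/2}(0)$, and $\supp \phi \subset M_1(0)$, so that the scaled cutoff \eqref{def:scaledcutoff} satisfies $\chi_{M_{\eps/2}(\xi)} \le \phi_{\eps,\xi} \le \chi_{M_\eps(\xi)}$ for every $\xi \in \R^n$ and $\eps > 0$. Since $\phi_{\eps,\xi} \to \chi_{\{\xi\}}$ pointwise as $\eps \to 0$ and $|\phi_{\eps,\xi}| \le 1$ is dominated by a $\nu$- and $\mu$-integrable constant, dominated convergence applied to \eqref{eq:revHolder2} will yield
\[
\nu(\{\xi\})^{1/p^\ast} \le C\, \mu(\{\xi\})^{1/p_{\max}}, \qquad \xi \in \R^n.
\]
Because $\mu$ is a finite Borel measure, its set of atoms $\Xi_\mu := \{\xi : \mu(\{\xi\})>0\}$ is at most countable; the inequality above then forces $\Xi := \{\xi : \nu(\{\xi\})>0\} \subset \Xi_\mu$, so $\Xi$ is at most countable as well. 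Setting $\nu_\xi := \nu(\{\xi\})$ and $\mu_\xi := \mu(\{\xi\})$ for $\xi \in \Xi$, the bound $\mu \ge \sum_{\xi \in \Xi} \mu_\xi \delta_{\{\xi\}}$ is immediate from countability of $\Xi$.

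The main step is to show $\nu(\R^n \setminus \Xi) = 0$, for which I will prove the stronger assertion that $\nu(K) = 0$ whenever $K$ is compact and disjoint from $\Xi_\mu$. Given such a $K$ and $\delta > 0$, a standard diagonal/compactness argument using the finiteness of $\mu$ and $\bigcap_{\eps > 0} M_\eps(\xi) = \{\xi\}$ for each non-atom $\xi \in K$ will yield some $\eps^\ast > 0$ such that
\[
\mu(M_\eps(\xi)) < \delta \quad \text{for all } \xi \in K,\ 0 < \eps < \eps^\ast.
\]
For fixed $\eps \in (0, \eps^\ast)$, the doubling property of $(\R^n, d_M)$ established in \Cref{sec:Sob} produces a Vitali-type cover $K \subset \bigcup_k M_{\eps/2}(\xi_k)$ with $\xi_k \in K$ such that the dilates $\{M_\eps(\xi_k)\}_k$ have overlap bounded by some $\kappa = \kappa(n, p^\ast, p^\ast - p_{\max})$. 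Applying \eqref{eq:revHolder2} at each $\xi_k$ in the form $\nu(M_{\eps/2}(\xi_k))^{1/p^\ast} \le C \mu(M_\eps(\xi_k))^{1/p_{\max}}$, summing, and pulling out one factor of $\mu(M_\eps(\xi_k))^{p^\ast/p_{\max} - 1} \le \delta^{p^\ast/p_{\max}-1}$, I obtain
\[
\nu(K) \le \sum_k \nu(M_{\eps/2}(\xi_k)) \le C^{p^\ast}\delta^{p^\ast/p_{\max}-1} \sum_k \mu(M_\eps(\xi_k)) \le C^{p^\ast} \kappa\, \delta^{p^\ast/p_{\max}-1} \|\mu\|.
\]
Since $p^\ast/p_{\max} > 1$ by \Cref{assumption:sp}, letting $\delta \to 0$ forces $\nu(K) = 0$. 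Inner regularity of $\nu$ then upgrades this to $\nu(\R^n \setminus \Xi_\mu) = 0$, and as $\nu(\{\xi\}) = 0$ for every $\xi \in \Xi_\mu \setminus \Xi$, we conclude $\nu(\R^n \setminus \Xi) = 0$ and hence $\nu = \sum_{\xi \in \Xi} \nu_\xi \delta_{\{\xi\}}$.

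The hard part will be organizing the covering step: one must verify that $(\R^n, d_M)$ admits Vitali-type covers with a universally bounded overlap constant, and that the uniform smallness of $\mu(M_\eps(\xi))$ on $K$ can indeed be extracted from the pointwise no-atom condition by a compactness argument. Both points are routine consequences of the doubling property recorded in \Cref{sec:Sob} and the finiteness of $\mu$, but they carry the geometric content of the argument; the strict inequality $p^\ast/p_{\max} > 1$ from \Cref{assumption:sp} is precisely what makes the $\delta^{p^\ast/p_{\max}-1}$ factor vanish and forces the non-atomic part of $\nu$ to be zero.
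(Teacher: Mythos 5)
Your argument is correct, and it reaches the same conclusion by a genuinely different route than the paper. The paper's proof starts from the same two facts you use — the countability of the atom set of the finite measure $\mu$ and the reverse Hölder estimate \eqref{eq:revHolder2}, expressed there as $\nu(M_\eps(\xi)) \le C\,\mu(M_\eps(\xi))^{p^*/p_{\max}}$ — but then invokes $\nu \ll \mu$ together with the Lebesgue–Radon–Nikodym differentiation theorem on the doubling metric measure space $(\R^n, d_M, \text{Leb})$: it writes $\nu(A) = \int_A D_\mu\, \d\mu$ with $D_\mu(x) = \lim_{r\to 0}\nu(M_r(x))/\mu(M_r(x))$ and uses the reverse Hölder bound to show $D_\mu$ vanishes off the atoms of $\mu$ and is finite on them. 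You instead unpack the differentiation theorem into a self-contained Vitali-type covering argument: the uniform smallness $\mu(M_\eps(\xi)) < \delta$ on a compact $K$ disjoint from the atoms of $\mu$ (obtained by the compactness argument you sketch, which is valid since $\bigcap_{r>0} M_r(\xi^*) = \{\xi^*\}$), a maximal $\eps/2$-separated net in $K$ with dilated rectangles of overlap $\kappa = \kappa(p^*, p^*-p_{\max})$, and the exponent gap $p^*/p_{\max} > 1$ to let the extracted factor $\delta^{p^*/p_{\max}-1}$ kill $\nu(K)$. Your version is longer but avoids citing a differentiation theorem for Radon measures with respect to the anisotropic metric balls, whose validity one would otherwise want to justify carefully (e.g.\ by Besicovitch for homothetic rectangles); the paper's version is shorter at the cost of treating that machinery as a black box. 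One small stylistic remark: you do not actually need the pointwise estimate $\nu(\{\xi\})^{1/p^*} \le C\mu(\{\xi\})^{1/p_{\max}}$ to conclude — once $\nu(\R^n\setminus\Xi_\mu)=0$ is established, the set $\Xi := \{\xi \in \Xi_\mu : \nu(\{\xi\})>0\}$ is automatically countable and carries all of $\nu$; the pointwise step is only needed if one wants to see directly that $\nu$-atoms are $\mu$-atoms.
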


\begin{proof}
By \Cref{lem:revHolder} it becomes clear that $\nu \ll \mu$, and for every $\xi \in \R^n$ and $\eps \in (0,1/2)$,
\begin{equation}
\label{eq:CChelp1}
\nu(M_{\eps}(\xi)) \le C \mu(M_{\eps}(\xi))^{\frac{p^{\ast}}{p_{\max}}}
\end{equation}
for some constant $C = C(\Vert \mu \Vert) > 0$. 
Since $\mu$ is a finite measure, the set $\Xi$ of atoms is at most countable. This proves already that $\mu \ge \sum_{\xi \in \Xi} \mu_{\xi}\delta_{\{\xi\}}$ and $\nu \ge \sum_{\xi \in \Xi} \nu_{\xi}\delta_{\{\xi\}}$ for two families of positive weights $(\mu_{\xi})_{\xi \in \Xi}$, $(\nu_{\xi})_{\xi \in \Xi}$ that are given by $\mu_{\xi} = \lim_{\eps \to 0} \mu(M_{\eps}(\xi))$ and $\nu_{\xi} = \lim_{\eps \to 0} \nu(M_{\eps}(\xi))$ for every $\xi \in \Xi$. By $\nu \ll \mu$, it follows that 
\begin{equation*}
\nu(M_{\eps}(\xi)) = \int_{M_{\eps}(\xi)} D_{\mu}(x)  \, \d \mu(x),
\end{equation*}
where 
\begin{equation*}
D_{\mu}(x) := \lim_{r \to 0} \frac{\nu(M_{r}(x))}{\mu(M_{r}(x))} \le C \lim_{r \to 0} \mu(M_{r}(x))^{\frac{p^{\ast}}{p_{\max}}-1} = \begin{cases}
C\mu_{x}^{\frac{p^{\ast}}{p_{\max}}-1},& ~~ x \in \Xi,\\
0,& ~~ \text{else},
\end{cases}
\end{equation*}
and we used \eqref{eq:CChelp1}. Consequently, $\nu = \sum_{\xi \in \Xi} \nu_{\xi}\delta_{\{\xi\}}$, as desired.
\end{proof}

As a consequence of \Cref{lem:revHolder} and \Cref{cor:CC}, we can infer a reverse H\"older inequality for the atoms of $\nu$ and $\widetilde{\mu}$.

\begin{corollary}
\label{cor:CC2}
Every $\xi \in \Xi$ is an atomic point of $\widetilde{\mu}$ and it holds
\begin{equation}
\label{eq:finallemmaHelp2}
S \nu_{\xi}^{\frac{p_{\max}}{p^{\ast}}} \le \widetilde{\mu}(\{\xi\}).
\end{equation}
In particular,
\begin{equation*}
\Vert \widetilde{\mu} \Vert \ge S \left(\sum_{\xi \in \Xi} \nu_{\xi}^{\frac{p_{\max}}{p^{\ast}}} +  \Vert v \Vert_{L^{p^{\ast}}(\R^n)}^{p_{\max}}\right).
\end{equation*}
\end{corollary}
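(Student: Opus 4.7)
The plan is to derive the atomic estimate $S\nu_{\xi}^{p_{\max}/p^{\ast}}\le \widetilde{\mu}(\{\xi\})$ at each $\xi\in\Xi$ by testing \Cref{lem:SobCor} against the truncation $\phi_{\eps,\xi}v_k$ and passing to the iterated limit $k\to\infty$, $\eps\to 0$. The global bound then follows by splitting $\widetilde{\mu}$ into its atomic contribution at $\Xi$ and an absolutely continuous lower bound coming from $v$.

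First, I would fix $\phi\in C_c^{\infty}(\R^n)$ with $0\le\phi\le 1$, $\phi(0)=1$, $\supp(\phi)\subset M_1(0)$ and form the scaled cutoff $\phi_{\eps,\xi}$. Since $\|\phi_{\eps,\xi}v_k\|_{L^{p^{\ast}}(\R^n)}\le \|v_k\|_{L^{p^{\ast}}(\R^n)}=1$, \Cref{lem:SobCor} applied to $\phi_{\eps,\xi}v_k$ gives
\[
S\|\phi_{\eps,\xi}v_k\|_{L^{p^{\ast}}(\R^n)}^{p_{\max}}\le \sum_{i=1}^n\frac{1}{p_i}\|D^{s_i}_{p_i}(\phi_{\eps,\xi}v_k)\|_{L^{p_i}(\R^n)}^{p_i}.
\]
For the left-hand side, the Brezis--Lieb lemma yields $|v_k|^{p^{\ast}}\,\d x \overset{v}{\rightharpoonup} |v|^{p^{\ast}}\,\d x+\nu$, so $\int\phi_{\eps,\xi}^{p^{\ast}}|v_k|^{p^{\ast}}\,\d x\to \int\phi_{\eps,\xi}^{p^{\ast}}|v|^{p^{\ast}}\,\d x+\int\phi_{\eps,\xi}^{p^{\ast}}\,\d\nu$ as $k\to\infty$. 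Since $v\in L^{p^{\ast}}(\R^n)$ and $\supp(\phi_{\eps,\xi})=M_\eps(\xi)$ shrinks to $\{\xi\}$, the first summand vanishes as $\eps\to 0$, whereas the second tends to $\nu_{\xi}$ by the atomic structure of $\nu$ from \Cref{cor:CC} and $\phi(0)=1$. Consequently the left-hand side converges to $S\nu_{\xi}^{p_{\max}/p^{\ast}}$.

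For the right-hand side, I would apply \eqref{eq:minkw} to split
\[
\|D^{s_i}_{p_i}(\phi_{\eps,\xi}v_k)\|_{L^{p_i}(\R^n)}\le \|\phi_{\eps,\xi}D^{s_i}_{p_i}v_k\|_{L^{p_i}(\R^n)}+\|v_kD^{s_i}_{p_i}\phi_{\eps,\xi}\|_{L^{p_i}(\R^n)}.
\]
The commutator-type second term is negligible in the iterated limit: writing $v_k=w_k+v$, \Cref{lem:klimit} forces $\|w_kD^{s_i}_{p_i}\phi_{\eps,\xi}\|_{L^{p_i}(\R^n)}\to 0$ as $k\to\infty$ for fixed $\eps$, while \Cref{lem:epslimit} gives $\|vD^{s_i}_{p_i}\phi_{\eps,\xi}\|_{L^{p_i}(\R^n)}\to 0$ as $\eps\to 0$; the resulting cross-term is absorbed via $(a+b)^{p_i}\le a^{p_i}+p_i(a+b)^{p_i-1}b$ using the uniform $L^{p_i}$-boundedness of $D^{s_i}_{p_i}v_k$. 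Since $p_i\ge 1$ and $0\le\phi_{\eps,\xi}\le 1$ imply $\phi_{\eps,\xi}^{p_i}\le \phi_{\eps,\xi}$, the leading term is controlled by
\[
\sum_{i=1}^n\frac{1}{p_i}\int\phi_{\eps,\xi}^{p_i}|D^{s_i}_{p_i}v_k|^{p_i}\,\d x \le \int\phi_{\eps,\xi}\sum_{i=1}^n\frac{1}{p_i}|D^{s_i}_{p_i}v_k|^{p_i}\,\d x\xrightarrow{k\to\infty}\int\phi_{\eps,\xi}\,\d\widetilde{\mu}\xrightarrow{\eps\to 0}\widetilde{\mu}(\{\xi\}),
\]
which together with the LHS analysis yields \eqref{eq:finallemmaHelp2}.

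For the global bound, I would first observe that $\widetilde{\mu}\ge \sum_{i=1}^n\frac{1}{p_i}|D^{s_i}_{p_i}v|^{p_i}\,\d x$ as positive Borel measures. This follows by testing the vague convergence of $\sum_{i}\frac{1}{p_i}|D^{s_i}_{p_i}v_k|^{p_i}\,\d x$ against any nonnegative $\psi\in C_c(\R^n)$ and applying Fatou's lemma twice (once to the inner integral defining $|D^{s_i}_{p_i}v|^{p_i}$, once to $\d x$), using $v_k\to v$ a.e. Since $\Xi$ is at most countable and hence Lebesgue-null, the atomic lower bound $\sum_{\xi\in\Xi}\widetilde{\mu}(\{\xi\})\delta_{\{\xi\}}$ and this absolutely continuous lower bound are mutually singular, so
\[
\|\widetilde{\mu}\|\ge \sum_{\xi\in\Xi}\widetilde{\mu}(\{\xi\})+\sum_{i=1}^n\frac{1}{p_i}\|D^{s_i}_{p_i}v\|_{L^{p_i}(\R^n)}^{p_i}.
\]
The conclusion follows by applying \eqref{eq:finallemmaHelp2} at each atom and \Cref{lem:SobCor} to $v$ (admissible by \eqref{eq:lower_semicont}). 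The main technical subtlety will be the careful coordination of the two nested limits with the Minkowski remainder uniformly over the summation index $i$.
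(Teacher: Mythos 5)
Your proof is correct and follows essentially the same strategy as the paper's: test \Cref{lem:SobCor} against $\phi_{\eps,\xi}v_k$, use Brezis--Lieb for the left-hand side, and the Minkowski splitting \eqref{eq:minkw} together with \Cref{lem:klimit} and \Cref{lem:epslimit} to isolate the leading term $\int \phi_{\eps,\xi}^{p_i}|D^{s_i}_{p_i}v_k|^{p_i}\,\d x$ on the right-hand side, then pass to the measures. You spell out two points the paper treats tersely --- the convexity inequality $(a+b)^{p_i}\le a^{p_i}+p_i(a+b)^{p_i-1}b$ to absorb the Minkowski remainder (the paper instead works directly with the $L^{p_i}$-norms and uses continuity of $t\mapsto t^{p_i}$ on the iterated $\limsup$), and the Fatou argument giving $\widetilde{\mu}\ge\sum_i\frac{1}{p_i}|D^{s_i}_{p_i}v|^{p_i}\,\d x$ which the paper summarizes as orthogonality to the atomic part --- but these are cosmetic and the route is the same.
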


\begin{proof}
In order to prove \eqref{eq:finallemmaHelp2}, we approximate $\widetilde{\mu}(\{\xi\})$ by cutoff functions $(\phi_{\eps,\xi})_{\eps}$, where $\phi \in C_c^{\infty}(\R^n)$ with $\supp(\phi) \subset M_1(0)$, $\phi(0) = 1$ and $0 \le \phi \le 1$. By Brezis–Lieb's lemma and \eqref{eq:conv_ptwise}, it holds that $\vert v_k\vert^{p^{\ast}} \,\d x \rightharpoonup \vert v \vert^{p^{\ast}} \,\d x + \nu$, and consequently:
\begin{equation}
\label{eq:finallemmaHelp3}
\lim_{\eps \to 0} \lim_{k \to \infty} \int_{\R^n} \phi_{\eps,\xi}^{p^{\ast}} \vert v_k \vert^{p^{\ast}} \,\d x = \lim_{\eps \to 0}\int_{M_{\eps}(\xi)} \phi_{\eps,\xi}^{p^{\ast}} \vert v \vert^{p^{\ast}} \,\d x + \lim_{\eps \to 0}\int_{M_{\eps}(\xi)} \phi_{\eps,\xi}^{p^{\ast}}  \,\d \nu \ge \nu_{\xi}.
\end{equation}
On the other hand, by \eqref{eq:minkw}, \Cref{lem:epslimit} and \Cref{lem:klimit}:
\begin{equation}
\label{eq:finallemmaHelp4}
\begin{aligned}
&\limsup_{\eps \to 0} \limsup_{k \to \infty}\left(\int_{\R^n} \vert D^{s_i}_{p_i}(\phi_{\eps,\xi}v_k)\vert^{p_i}  \,\d x\right)^{1/p_i}\\
&\le \limsup_{\eps \to 0} \limsup_{k \to \infty}\Bigg[\left(\int_{\R^n} \vert D^{s_i}_{p_i}(\phi_{\eps,\xi}) w_k\vert^{p_i}  \, \d x\right)^{1/p_i} + \left(\int_{\R^n} \vert D^{s_i}_{p_i}(\phi_{\eps,\xi})v\vert^{p_i}  \, \d x\right)^{1/p_i} \\
& \hspace*{4cm} + \left(\int_{\R^n} \vert D^{s_i}_{p_i}v_k\vert^{p_i}\vert \phi_{\eps,\xi} \vert^{p_i}  \, \d x \right)^{1/p_i}\Bigg]\\
&= \limsup_{\eps \to 0} \limsup_{k \to \infty} \left(\int_{\R^n} \vert D^{s_i}_{p_i}v_k\vert^{p_i}\vert \phi_{\eps,\xi} \vert^{p_i}  \, \d x \right)^{1/p_i}.
\end{aligned}
\end{equation}
Consequently, by combining \eqref{eq:finallemmaHelp3}, \eqref{eq:finallemmaHelp4}  and applying \Cref{lem:SobCor},
\begin{equation*}
\begin{split}
S \nu_{\xi}^{\frac{p_{\max}}{p^{\ast}}} &\le \lim_{\eps \to 0} \lim_{k \to \infty} S\left(\int_{\R^n} \phi_{\eps,\xi}^{p^{\ast}} \vert v_k\vert^{p^{\ast}}  \, \d x\right)^{\frac{p_{\max}}{p^{\ast}}}\\
&\le \limsup_{\eps \to 0} \limsup_{k \to \infty} \sum_{i =1}^n \frac{1}{p_i} \int_{\R^n} \vert D^{s_i}_{p_i}(\phi_{\eps,\xi}v_k)\vert^{p_i}  \, \d x\\
&\le \limsup_{\eps \to 0} \limsup_{k \to \infty} \sum_{i =1}^n \frac{1}{p_i} \int_{\R^n} \vert D^{s_i}_{p_i}v_k\vert^{p_i}\vert \phi_{\eps,\xi} \vert^{p_i} \, \d x\\
&\le \limsup_{\eps \to 0} \widetilde{\mu}(M_{\eps}(\xi)) \\
&=  \widetilde{\mu}(\{\xi\}).
\end{split}
\end{equation*}
Hence, \eqref{eq:finallemmaHelp2} holds true. From the observation that $\sum_{i =1}^n \frac{1}{p_i} \vert D^{s_i}_{p_i} v\vert^{p_i}\d x$ is orthogonal to the atomic part of $\widetilde{\mu}$, we conclude that
\begin{equation*}
\Vert \widetilde{\mu} \Vert \ge S \sum_{\xi \in \Xi} \nu_{\xi}^{\frac{p_{\max}}{p^{\ast}}} + \sum_{i =1}^n \frac{1}{p_i} \Vert D_{p_i}^{s_i}v\Vert_{L^{p_i}(\R^n)}^{p_i} \ge S \left(\sum_{\xi \in \Xi} \nu_{\xi}^{\frac{p_{\max}}{p^{\ast}}} + \Vert v \Vert_{L^{p^{\ast}}(\R^n)}^{p_{\max}} \right),
\end{equation*}
where we also applied \Cref{lem:SobCor}. This concludes the proof.
\end{proof}

\subsection{Proof of main results}\label{ssec:main}

This subsection is devoted to the proofs of the main results \Cref{thm:existence} and \Cref{cor:existence}. We first prove \Cref{thm:unit_norm}.

\begin{proof} [Proof of \Cref{thm:unit_norm}]
Let us assume by contradiction that $\Vert v \Vert_{L^{p^{\ast}}(\R^n)} < 1$.
First, we have that
\begin{equation}\label{eq:unit_normHelp1}
\begin{aligned}
1 = \frac{1}{S} \left(\Vert \widetilde{\mu}\Vert + \mu_{\infty}\right) & \ge \Vert v \Vert_{L^{p^{\ast}}(\R^n)}^{p_{\max}} + \sum_{\xi\in\Xi} \nu_{\xi}^{\frac{p_{\max}}{p^{\ast}}} + \nu_{\infty}^{\frac{p_{\max}}{p^{\ast}}} \\
& \ge \Vert v \Vert_{L^{p^{\ast}}(\R^n)}^{p_{\max}} + \Vert \nu \Vert^{\frac{p_{\max}}{p^{\ast}}} + \nu_{\infty}^{\frac{p_{\max}}{p^{\ast}}},
\end{aligned}
\end{equation}
where the second step follows from \Cref{lem:inftyrevHolder} and \Cref{cor:CC2}. In order to prove the first equality in \eqref{eq:unit_normHelp1} we define $\psi_R : \R^n \to \R$ as in the proof of \Cref{lem:inftyrevHolder} and compute
\begin{equation*}
\begin{split}
S &= \lim_{k \to \infty} \sum_{i=1}^n\frac{1}{p_i} \Vert D_{p_i}^{s_i}v_k \Vert_{L^{p_i}(\R^n)}^{p_i}\\
&= \lim_{R \to \infty} \lim_{k \to \infty} \int_{\R^n} \sum_{i=1}^n \frac{1}{p_i}\vert D^{s_i}_{p_i}v_k\vert^{p_i}(1-\psi_R^{p_i}) \,  \d x + \lim_{R \to \infty} \limsup_{k \to \infty} \int_{\R^n} \sum_{i=1}^n \frac{1}{p_i}\vert D^{s_i}_{p_i}v_k\vert^{p_i} \psi_R^{p_i} \, \d x\\
&= \lim_{R \to \infty} \int_{\R^n} (1-\psi_R^{p_i}) \, \d \widetilde{\mu} + \lim_{R \to \infty} \limsup_{k \to \infty} \int_{\R^n} \sum_{i=1}^n \frac{1}{p_i}\vert D^{s_i}_{p_i}w_k\vert^{p_i} \psi_R^{p_i} \, \d x\\
&= \Vert \widetilde{\mu} \Vert + \mu_{\infty},
\end{split}
\end{equation*}
where we used \eqref{eq:muChar}. This proves \eqref{eq:unit_normHelp1}.
On the other hand, it holds that
\begin{equation}
\label{eq:unit_normHelp2}
\Vert v \Vert_{L^{p^{\ast}}(\R^n)}^{p_{\max}} + \Vert \nu \Vert^{\frac{p_{\max}}{p^{\ast}}} + \nu_{\infty}^{\frac{p_{\max}}{p^{\ast}}} \ge 1.
\end{equation}
This follows from the observation
\begin{equation}
\label{eq:unit_normHelp3}
\begin{split}
1 &= \lim_{k \to \infty} \Vert v_k \Vert_{L^{p^{\ast}}(\R^n)}^{p^{\ast}} = \lim_{R \to \infty} \lim_{k \to \infty} \int_{\R^n} (1-\psi_R^{p^{\ast}}) \vert v_k \vert^{p^{\ast}} \, \d x + \lim_{R \to \infty} \lim_{k \to \infty} \int_{\R^n} \psi_R^{p^{\ast}} \vert v_k \vert^{p^{\ast}} \, \d x\\
&=\Vert v \Vert_{L^{p^{\ast}}(\R^n)}^{p^{\ast}} + \Vert \nu \Vert+ \nu_{\infty},
\end{split}
\end{equation}
where we used that $\vert v_k \vert^{p^{\ast}}\, \d x \rightharpoonup \vert v \vert^{p^{\ast}} \, \d x + \nu$ as a consequence of Brezis–Lieb's lemma and \eqref{eq:conv_ptwise}, and \eqref{eq:nuinftyrep1}. In order to derive \eqref{eq:unit_normHelp2}, we take \eqref{eq:unit_normHelp3} to the power $p_{\max}/p^{\ast} < 1$.
Combining \eqref{eq:unit_normHelp1} and \eqref{eq:unit_normHelp2} yields that
\begin{equation*}
\Vert v \Vert_{L^{p^{\ast}}(\R^n)}^{p_{\max}} + \Vert \nu \Vert^{\frac{p_{\max}}{p^{\ast}}} + \nu_{\infty}^{\frac{p_{\max}}{p^{\ast}}} = 1.
\end{equation*}
Consequently, $\Vert v \Vert_{L^{p^{\ast}}(\R^n)}^{p^{\ast}}, \Vert \nu \Vert, \nu_{\infty} \in \{0,1\}$. By assumption, we can now deduce that $v = 0$. Moreover, by \eqref{eq:sup} we know that $\nu_{\infty} \le 1/2$ and therefore it must be that 
\begin{equation*}
1 = \Vert \nu \Vert = \sum_{\xi \in \Xi} \nu_{\xi},
\end{equation*}
where we applied \Cref{cor:CC} in the last step.
From \eqref{eq:unit_normHelp1}, we deduce that $1 \ge  \sum_{\xi\in\Xi} \nu_{\xi}^{\frac{p_{\max}}{p^{\ast}}}$. It follows that
\begin{equation*}
1 = \left( \sum_{\xi \in \Xi} \nu_{\xi} \right)^{\frac{p_{\max}}{p^{\ast}}} \le \sum_{\xi \in \Xi} \nu_{\xi}^{\frac{p_{\max}}{p^{\ast}}} \le 1,
\end{equation*}
and therefore all inequalities in the above line can be replaced by equality signs. Thus, $\nu_{\xi} \in \{0,1\}$ for every $\xi \in \Xi$ and therefore, there exists exactly one $\xi_0 \in \Xi$ such that $\nu_{\xi_0} = 1$ and $\nu_{\xi} = 0$ for every $\xi \in \Xi \setminus \{\xi_0\}$. This leads to a contradiction since by \eqref{eq:sup}:
\begin{equation*}
\frac{1}{2} = \sup_{y \in \R^n} \int_{M_1(y)} \vert v_k \vert^{p^{\ast}} \, \d x \ge \sup_{y \in \R^n} \int_{M_1(\xi_0)} \vert v_k \vert^{p^{\ast}} \, \d x \to \Vert \nu \Vert = 1.
\end{equation*}
Therefore, it must hold that $\Vert v \Vert_{L^{p^{\ast}}(\R^n)} = 1$, as desired.
\end{proof}

We are now ready to prove \Cref{thm:existence}.

\begin{proof} [Proof of \Cref{thm:existence}]
As explained in the beginning of \Cref{sec:CCP}, let us take a minimizing sequence $(u_k) \subset \Dsp$ for $S$ satisfying $\|u_k\|_{L^{p^{\ast}}(\mathbb{R}^n)} = 1$ for every $k \in \mathbb{N}$. We may assume that $u_k$ is nonnegative because $(|u_k|)$ is also a minimizing sequence for $S$. Indeed, $\|D^{s_i}_{p_i} |u_k| \|_{L^{p_i}(\mathbb{R}^n)} \leq \|D^{s_i}_{p_i}u_k \|_{L^{p_i}(\mathbb{R}^n)}$ follows from the inequality
\begin{equation*}
||u(x)|-|u(x+he_i)|| \leq |u(x)-u(x+he_i)|.
\end{equation*}
Let us now consider the rescaled function $v_k$ as in the beginning of \Cref{sec:CCP}, which satisfies \eqref{eq:sup}, \eqref{eq:conv_Dsp}, and \eqref{eq:conv_ptwise}. Then, it follows from \eqref{eq:lower_semicont} and \Cref{thm:unit_norm} that $v$ is a nontrivial minimizer for $S$. Moreover, since $v_k$ is nonnegative a.e. in $\mathbb{R}^n$, so is $v$.
\end{proof}

Let us next prove \Cref{cor:existence}.

\begin{proof} [Proof of \Cref{cor:existence}]
By the standard Lagrange multiplier rule, any minimizer $u \in \mathcal{D}^{\vec{s},\vec{p}}(\R^n)$ of \eqref{eq:min_prob} satisfies
\begin{equation*}
\sum_{i=1}^n s_i(1-s_i) \int_{\mathbb{R}} \frac{|u(x)-u(x+he_i)|^{p_i-2}(u(x)-u(x+he_i))}{|h|^{1+s_ip_i}} \,\mathrm{d}h = S|u|^{p^{\ast}-2}u \quad \text{in}~ \mathbb{R}^n.
\end{equation*}
Thus, the function $v$ defined by $v(x) = u(S^{-\frac{1}{s_1p_1}}x_1, \dots, S^{-\frac{1}{s_np_n}}x_n)$ solves \eqref{eq:main_eq}.
\end{proof}

By rescaling a solution of \eqref{eq:main_eq}, we get infinitely many solutions.

\begin{corollary}
If $u \in \mathcal{D}^{\vec{s},\vec{p}}(\R^n)$ solves \eqref{eq:main_eq}, then the function $v$ defined by \eqref{eq:v} also solves \eqref{eq:main_eq}.
\end{corollary}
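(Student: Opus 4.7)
The plan is to verify by direct substitution that the nonlinear operator on the left-hand side of \eqref{eq:main_eq} enjoys the same scaling invariance as the nonlinearity on the right-hand side under the transformation in \eqref{eq:v}, with the exponent $\alpha := s_{\max}p_{\max}/(p^{\ast}-p_{\max})$ chosen precisely so that the two sides scale identically. This is the pointwise counterpart of the norm-level scaling already exploited in \Cref{lem:scaling}.

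First I would fix $x\in\R^n$ and compute $(-\partial_{ii})^{s_i}_{p_i} v(x)$. Writing $X := (\rho^{m_1}x_1+y_1,\dots,\rho^{m_n}x_n+y_n)$, we have
\begin{equation*}
v(x)-v(x+he_i) = \rho^{\alpha}\bigl(u(X)-u(X+\rho^{m_i} h\, e_i)\bigr).
\end{equation*}
Substituting into the defining integral and performing the change of variables $H = \rho^{m_i} h$ (so $\d h = \rho^{-m_i}\d H$) gives
\begin{equation*}
(-\partial_{ii})^{s_i}_{p_i} v(x) = \rho^{\alpha(p_i-1)+m_i s_i p_i}\,(-\partial_{ii})^{s_i}_{p_i} u(X).
\end{equation*}

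Next I would check the crucial algebraic identity $\alpha(p_i-1)+m_i s_i p_i = \alpha(p^{\ast}-1)$ for every $i\in\{1,\dots,n\}$. From the definition \eqref{eq:m_i} of $m_i$ a short computation gives
\begin{equation*}
m_i s_i p_i = s_{\max}\Bigl(1-\tfrac{p_i}{p^{\ast}}\Bigr)\cdot\frac{p_{\max}p^{\ast}}{p^{\ast}-p_{\max}} = \alpha\,(p^{\ast}-p_i),
\end{equation*}
so that $\alpha(p_i-1)+m_i s_i p_i = \alpha(p^{\ast}-1)$, independently of $i$. Consequently,
\begin{equation*}
\sum_{i=1}^n (-\partial_{ii})^{s_i}_{p_i} v(x) = \rho^{\alpha(p^{\ast}-1)} \sum_{i=1}^n (-\partial_{ii})^{s_i}_{p_i} u(X).
\end{equation*}

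Finally, since $u$ solves \eqref{eq:main_eq}, the right-hand side equals $\rho^{\alpha(p^{\ast}-1)} |u(X)|^{p^{\ast}-2} u(X) = |v(x)|^{p^{\ast}-2} v(x)$, so $v$ solves \eqref{eq:main_eq} at $x$. The argument is purely algebraic, and the only potentially delicate point is the identity $\alpha(p_i-1)+m_i s_i p_i = \alpha(p^{\ast}-1)$; however this follows immediately from the definition of $m_i$ in \eqref{eq:m_i} and of $\alpha$, which are precisely engineered to make the equation scaling invariant. The membership $v\in\mathcal{D}^{\vec{s},\vec{p}}(\R^n)$ is guaranteed by \Cref{lem:scaling}.
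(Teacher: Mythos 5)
Your proof is correct and is precisely the rescaling computation that the paper leaves implicit (the corollary is stated without proof, preceded only by the remark ``By rescaling a solution of \eqref{eq:main_eq}, we get infinitely many solutions''). The change of variables $H=\rho^{m_i}h$, the resulting exponent $\alpha(p_i-1)+m_is_ip_i$, and the verification that $m_is_ip_i=\alpha(p^{\ast}-p_i)$ so this exponent equals $\alpha(p^{\ast}-1)$ independently of $i$, are all exactly right, and \Cref{lem:scaling} gives $v\in\mathcal{D}^{\vec{s},\vec{p}}(\R^n)$ as you note.
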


\end{document}